\newcommand{\func}[1]{\operatorname{#1}}
\newtheorem{theorem}{Theorem}[section]
\newtheorem{proposition}[theorem]{Proposition}   
\theoremstyle{definition}
\newtheorem{example}[theorem]{Example}
\theoremstyle{remark}
\newtheorem{remark}[theorem]{Remark}
\numberwithin{equation}{section}
\begin{document}

\title[Approximations to the resolvent of fractional operators]{Pad\'{e}-type approximations to the resolvent of fractional powers of operators}


\author{Lidia Aceto}
\address{Departments of Mathematics, University of Pisa, Via F. Buonarroti, 1/C, 56127 Pisa, Italy}
\curraddr{}
\email{lidia.aceto@unipi.it}
\thanks{The authors are members of the INdAM research group GNCS}

\author{Paolo Novati}
\address{Departments of Mathematics and Geosciences, University of Trieste, via Valerio 12/1, 34127 Trieste, Italy}
\curraddr{}
\email{novati@units.it}
\thanks{This work was supported by GNCS-INdAM and by FRA-University of Trieste}

\subjclass[2010]{Primary 47A58,  65F60,  65D32}

\date{}

\dedicatory{}

\begin{abstract}
We study a reliable pole selection for the rational approximation of the resolvent of fractional powers of operators in both the finite and infinite dimensional
setting. The analysis exploits the representation in terms of hypergeometric
functions of the error of the Pad\'{e} approximation of the fractional
power. We provide quantitatively accurate error estimates that can be
used fruitfully  for practical computations. We present some numerical
examples to corroborate the theoretical results. The behavior of the
rational Krylov methods based on this theory is also presented.
\end{abstract}

\maketitle


\section{Introduction}

Let $\mathcal{L}$ be a self-adjoint positive operator with spectrum $\sigma (%
\mathcal{L})\subseteq \lbrack c,+\infty ),$ $c>0$, acting on a Hilbert space 
$\mathcal{H}$ endowed with norm $\left\Vert \cdot \right\Vert _{\mathcal{H}}$
and operator norm $\left\Vert \cdot \right\Vert _{\mathcal{H}\rightarrow 
\mathcal{H}}$. We assume that $\mathcal{L}$ possesses a compact inverse so
that it can be written in terms of its spectral decomposition and the
operational calculus $f(\mathcal{L})$ can be defined by working on the
eigenvalues as in the finite dimensional setting. Moreover,  denoting by 
$\{\mu _{j}\}_{j=1}^{\infty }$ the eigenvalues of $\mathcal{L}$ and assuming that they are
numbered in increasing order of magnitude, we have $c=\mu _{1}.$
This paper deals with the numerical approximation of the resolvent of fractional powers%
\[
\left( I+h\mathcal{L}^{\alpha }\right) ^{-1},\quad 0<\alpha <1,\quad h>0,
\]
where $I$ denotes the identity operator. This kind of resolvent appears
for instance when using an implicit multistep or a Runge-Kutta method for
solving fractional in space parabolic-type equations in which $\mathcal{L}$\
represents the Laplacian operator with Dirichlet boundary conditions and $h$ depends  both on the time step and the
parameters of the integrator. We quote here \cite{MMP} and the references therein contained 
for a comprehensive treatment of the operational calculus involving fractional powers, in the 
more generic setting of linear operators on Banach spaces.

Clearly the computation of $\left( I+h\mathcal{L}^{\alpha }\right) ^{-1}$ is closely connected to the approximation of the
fractional power $\mathcal{L}^{-\alpha }$ because%
\begin{equation}
\frac{1}{1+h\lambda ^{\alpha }}=\frac{\lambda ^{-\alpha }}{\lambda ^{-\alpha
}+h},  \label{eqa}
\end{equation}%
and hence, any approximant of the function $\lambda ^{-\alpha }$ in $%
[c,+\infty )$ can be employed to define a method for the resolvent. In this view, 
recalling the analysis given in \cite{AN0}, the
basic aim of this work is to consider Pad\'{e}-type approximations of the
fractional power, centered at points that allows to minimize as much as
possible the error for $\left( I+h\mathcal{L}^{\alpha }\right) ^{-1}$. This
idea is justified by the fact that the function $\left( 1+h\lambda ^{\alpha
}\right) ^{-1}$ behaves like $\lambda ^{-\alpha }$ for large values of $%
\lambda $.

For any given $\tau >0$,\ let $R_{k-1,k}(\lambda /\tau )$ be the $(k-1,k)$%
-Pad\'{e} approximant of $(\lambda /\tau )^{-\alpha }$ centered at $1$, and
consider the approximation 
\begin{equation}
\mathcal{L}^{-\alpha }\approx \mathcal{R}_{k-1,k}(\mathcal{L}),\quad 
\mathcal{R}_{k-1,k}(\lambda ):=\tau ^{-\alpha }R_{k-1,k}(\lambda /\tau ).
\label{pad}
\end{equation}%
It is well known that the choice of the parameter $\tau $ is fundamental for
the quality of the approximation, see \cite{AN0,AN}. Since $\mathcal{L}$ is
assumed to be self-adjoint the analysis of this approach can be made by
working scalarly in the real interval $[c,+\infty )$. In particular, working
with unbounded operator, in \cite{AN0} it has been shown how to suitably
define the parameter $\tau $ by looking for an approximation of the optimal
value given by the solution of%
\[
\min_{\tau >0}\max_{\lambda \in \lbrack c,+\infty )}\left\vert \lambda
^{-\alpha }-\mathcal{R}_{k-1,k}(\lambda )\right\vert .
\]
As for the resolvent, using $\lambda ^{-\alpha }\approx \mathcal{R%
}_{k-1,k}(\lambda )$ in (\ref{eqa}) and writing%
\[
\mathcal{R}_{k-1,k}(\lambda )=\frac{p_{k-1}\left( \lambda \right) }{%
q_{k}\left( \lambda \right) },\quad p_{k-1}\in \Pi _{k-1},q_{k}\in \Pi _{k},
\]
where $\Pi _{j}$ denotes the set of polynomials of degree at most $j$, we
have%
\begin{equation}
\frac{1}{1+h\lambda ^{\alpha }}\approx \frac{p_{k-1}\left( \lambda \right) }{%
p_{k-1}\left( \lambda \right) +hq_{k}\left( \lambda \right) }=:\mathcal{S}%
_{k-1,k}(\lambda ).  \label{sk}
\end{equation}%
Obviously $\mathcal{S}_{k-1,k}$ inherits the dependence on $\tau $, and the
main contribute of this paper is to define the parameter $\tau $ in order to
minimize  as much  as possible the error%
\begin{equation} \label{EK}
E_{k}:=\left\Vert \left( I+h\mathcal{L}^{\alpha }\right) ^{-1}-\mathcal{S}%
_{k-1,k}(\mathcal{L})\right\Vert _{\mathcal{H}\rightarrow \mathcal{H}}
\end{equation}
so that the idea here is to define $\tau $ by looking for the solution of 
\begin{equation}
\min_{\tau >0}\max_{\lambda \in \lbrack c,+\infty )}\left\vert \left(
1+h\lambda ^{\alpha }\right) ^{-1}-\mathcal{S}_{k-1,k}(\lambda )\right\vert .
\label{minm}
\end{equation}%
We derive an approximate solution $\tau _{k}$ of (\ref{minm}) that depends
on $k$ and $h$, and we are able to show that the error $E_{k}$ 
decays like $\mathcal{O}(k^{-4\alpha }),$ that is, sublinearly. We experimentally show that using this new parameter
sequence it is possible to improve the approximation attainable by taking $%
\tau _{k}$ as in \cite{AN0} for $\mathcal{L}^{-\alpha }$ and then using (\ref%
{eqa}) to compute $\left( I+h\mathcal{L}^{\alpha }\right) ^{-1}$. The latter
approach has recently been used in \cite{Ber}.

In the applications, where one works with a discretization $\mathcal{%
L}_{N}$ of $\mathcal{L}$, if the largest eigenvalue $\lambda _{N}$ of $%
\mathcal{L}_{N}$ (or an approximation of it) is known, then the theory
developed for the unbounded case can be refined. In particular, here we
present a new sequence of parameters $\left\{ \tau _{k,N}\right\} _{k}$ that
can be use to handle this situation and that allows to compute $\left( I+h%
\mathcal{L}_{N}^{\alpha }\right) ^{-1}$ with a linear decay of the error,
that is, of the type $r^{k}$, $0<r<1$. In both situations, unbounded and bounded, we provide error estimates that
are quantitatively quite accurate and therefore useful for an a-priori
choice of $k$ that, computationally, represents the number of
operator/matrix inversions (cf. (\ref{sk})). 

The poles of $\mathcal{S}_{k-1,k}$ can also be used to define a rational Krylov method for the
computation of $\left( I+h\mathcal{L}_{N}^{\alpha }\right) ^{-1}v$, $v\in 
\mathbb{R}^{N}$, and the error estimates as hints for the a-priori
definition of the dimension of the Krylov space. We remark that the poles of 
$\mathcal{S}_{k-1,k}$ completely change with $k$, so that for a Krylov
method it is fundamental to decide at the beginning the dimension to reach,
that is, the set of poles. The construction of rational Krylov methods based
on the theory presented in the paper is considered at the end of the paper.

The paper is organized as follows. In Section \ref{sec2} the basic features
concerning the rational approximation of the fractional power $\mathcal{L}%
^{-\alpha }$ are recalled. Section \ref{sec3} and Section \ref{sec4} contain
the error analysis for the infinite and finite dimensional case,
respectively. Finally, in Section \ref{sec5} some numerical results are
reported, including some experiments with rational Krylov methods.


\section{The rational approximation}

\label{sec2}

The $(k-1,k)$-Pad\'{e} type approximation to $\mathcal{L}^{-\alpha }$
recalled in (\ref{pad}) can be obtained starting from the integral
representation (see \cite[Eq. (V.4) p. 116]{RB})
\begin{equation}
\mathcal{L}^{-\alpha }=\frac{\sin (\alpha \pi )}{(1-\alpha )\pi }%
\int_{0}^{\infty }(\rho ^{1/(1-\alpha )}I+\mathcal{L})^{-1}d\rho ,
\label{mat}
\end{equation}%
and using the change of variable%
\begin{equation}
\rho ^{1/(1-\alpha )}=\tau \frac{1-t}{1+t},\qquad \tau >0,  \label{jss}
\end{equation}%
that yields 
\begin{equation}
\mathcal{L}^{-\alpha }=\frac{2\sin (\alpha \pi )\tau ^{1-\alpha }}{\pi }%
\int_{-1}^{1}\left( 1-t\right) ^{-\alpha }\left( 1+t\right) ^{\alpha
-2}\left( \tau \frac{1-t}{1+t}I+\mathcal{L}\right) ^{-1}dt.  \label{nint}
\end{equation}%
Using the $k$-point Gauss-Jacobi rule with respect to the weight function $%
\omega (t)=\left( 1-t\right) ^{-\alpha }\left( 1+t\right) ^{\alpha -1}$ we
obtain the rational approximation (see (\ref{pad})) 
\begin{equation}
\mathcal{L}^{-\alpha }\approx \sum_{j=1}^{k}\gamma _{j}(\eta _{j}I+\mathcal{L%
})^{-1}=\tau ^{-\alpha }R_{k-1,k}\left( \frac{\mathcal{L}}{\tau }\right) =%
\mathcal{R}_{k-1,k}(\mathcal{L}).  \label{rapp}
\end{equation}%
The coefficients $\gamma _{j}$ and $\eta _{j}$ are given by%
\begin{equation}
\gamma _{j}=\frac{2\sin (\alpha \pi )\tau ^{1-\alpha }}{\pi }\frac{w_{j}}{%
1+\vartheta _{j}},\qquad \eta _{j}=\frac{\tau (1-\vartheta _{j})}{%
1+\vartheta _{j}},  \label{gamma_eta}
\end{equation}%
where $w_{j}$ and $\vartheta _{j}$ are, respectively, the weights and nodes
of the Gauss-Jacobi quadrature rule. Denoting by $\zeta _{r}$ the $r$th
zero of the Jacobi polynomial ${\mathcal{P}}_{k-1}^{(\alpha ,1-\alpha
)}\left( \lambda \right) $ and setting 
\begin{equation}
\epsilon _{r}=\tau \frac{1-\zeta _{r}}{1+\zeta _{r}},\quad r=1,2,\dots ,k-1,
\label{eq:epsrad}
\end{equation}%
from~\cite[Proposition~2]{AN2} we can express $\mathcal{R}_{k-1,k}(\lambda )$
as the rational function 
\begin{equation}
\mathcal{R}_{k-1,k}(\lambda )=\frac{p_{k-1}(\lambda )}{q_{k}(\lambda )}=%
\frac{\chi \prod_{r=1}^{k-1}(\lambda +\epsilon _{r})}{\prod_{j=1}^{k}(%
\lambda +\eta _{j})},  \label{quoz}
\end{equation}%
where 
\[
\quad \chi =\frac{\eta _{k}}{\tau ^{\alpha }}\frac{\binom{k+\alpha -1}{k-1}}{%
\binom{k-\alpha }{k}}\prod_{j=1}^{k-1}\frac{\eta _{j}}{\epsilon _{j}}.
\]
We refer here to \cite{Bo, HetP}  for other effective rational approaches based on different integral representations 
and quadrature rules.

As for the resolvent, using the rational form $\mathcal{S}%
_{k-1,k}$ defined in (\ref{sk}) we have that%
\begin{equation}
\left( I+h\mathcal{L}^{\alpha }\right) ^{-1}\approx \sum_{j=1}^{k}\overline{%
\gamma }_{j}(\overline{\eta }_{j}I+\mathcal{L})^{-1},  \label{npad}
\end{equation}%
where $\overline{\gamma }_{j}$ are the coefficients of the partial fraction
expansion of $\mathcal{S}_{k-1,k}$ and $-\overline{\eta }_{j}$ are
the roots of the polynomial $p_{k-1}\left( \lambda \right) +hq_{k}\left(
\lambda \right) \in \Pi _{k}.$ From~\cite[Proposition~1]{Ber} we know that
all the values $-\overline{\eta }_{j}$ are real and simple. To locate them
on the real axis, we recall that $\vartheta _{j}$ are the zeros of the
Jacobi polynomial ${\mathcal{P}}_{k}^{(-\alpha ,\alpha -1)}\left( \lambda
\right) $. So, using (\ref{gamma_eta}) it is immediate to verify that the
roots of $q_{k}(\lambda )$ are all real, simple and negative, which implies
that its coefficients are strictly positive. The same conclusions apply to $%
p_{k-1}(\lambda ).$ Therefore, since all the coefficients of $p_{k-1}\left(
\lambda \right) +hq_{k}\left( \lambda \right) $ are strictly positive by
construction, according to the Descartes' rule of signs, we are also sure
that $-\overline{\eta }_{j}$ are negative and therefore that the
approximation (\ref{npad}) is well-defined.


\section{Error analysis}

\label{sec3}

Before starting we want to emphasize that the rational forms $\mathcal{R}%
_{k-1,k}$ and $\mathcal{S}_{k-1,k}$ depend on the value of $\tau $ in (\ref%
{jss}). Anyway, in order to keep the notations as simple as possible, in
what follows we avoid to write the explicit dependence on this parameter.
Moreover, throughout this and the following section we frequently use the
symbol $\sim $ to compare sequences, with the underlying meaning that $%
a_{k}\sim b_{k}$ if $a_{k}=b_{k}(1+\varepsilon _{k})$ where $\varepsilon
_{k}\rightarrow 0$ as $k\rightarrow +\infty.$

First of all we recall the following result given in \cite[Proposition 2]%
{AN0}, and based on the representation of the error arising from the Pad\'e approximation 
of the fractional power
\begin{equation}
e_{k}(\lambda ):=\lambda ^{-\alpha }-\mathcal{R}_{k-1,k}(\lambda )
\label{efp}
\end{equation}%
in terms of hypergeometric functions whose detailed analysis can be found in 
\cite{E}.

\begin{proposition}
\label{p1}For large values of $k$, the following representation of the error
holds%
\begin{equation}
e_{k}(\lambda )=2\sin (\alpha \pi )\lambda ^{-\alpha }\left[ \frac{\lambda
^{1/2}-\tau ^{1/2}}{\lambda ^{1/2}+\tau ^{1/2}}\right] ^{2k}\left( 1+%
\mathcal{O}\left( \frac{1}{k}\right) \right) .  \label{ek}
\end{equation}
\end{proposition}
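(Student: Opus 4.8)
The plan is to derive the asymptotic formula for $e_{k}(\lambda)$ from the known hypergeometric representation of the Padé error for the scalar function $z^{-\alpha}$ centered at $1$. First I would recall that the $(k-1,k)$-Padé approximant $R_{k-1,k}(z)$ to $z^{-\alpha}$ at $z=1$ has an error that can be written in closed form via Gauss hypergeometric functions; this is the classical formula going back to the work cited as \cite{E}, which expresses $z^{-\alpha}-R_{k-1,k}(z)$ as a constant times $z^{-\alpha}$ times a ratio involving ${}_{2}F_{1}$ evaluated at $1-z$ (or a Möbius image thereof). The normalization constant is fixed by the integral representation used to build the approximant, and Euler's reflection formula $\Gamma(\alpha)\Gamma(1-\alpha)=\pi/\sin(\alpha\pi)$ is what produces the prefactor $2\sin(\alpha\pi)$ once the Gamma quotients coming from the Padé coefficients are simplified.

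Next I would substitute $\lambda/\tau$ for $z$ to pass from $R_{k-1,k}(z)$ to $\mathcal{R}_{k-1,k}(\lambda)=\tau^{-\alpha}R_{k-1,k}(\lambda/\tau)$, so that $e_{k}(\lambda)=\lambda^{-\alpha}-\mathcal{R}_{k-1,k}(\lambda)=\tau^{-\alpha}\bigl[(\lambda/\tau)^{-\alpha}-R_{k-1,k}(\lambda/\tau)\bigr]$. The heart of the matter is then the large-$k$ asymptotics of the ${}_{2}F_{1}$ factor. The key step is to recognize that the natural variable is the conformal map $w=\frac{\sqrt{z}-1}{\sqrt{z}+1}$ (equivalently $w=\frac{\sqrt{\lambda}-\sqrt{\tau}}{\sqrt{\lambda}+\sqrt{\tau}}$ after the rescaling), which maps the cut plane to the unit disk and sends the center $z=1$ to $w=0$. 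Under this map the hypergeometric quotient degenerates, for $k\to\infty$, to $w^{2k}$ up to a factor $1+\mathcal{O}(1/k)$; this is exactly the mechanism by which the geometric decay rate $\bigl[(\sqrt{\lambda}-\sqrt{\tau})/(\sqrt{\lambda}+\sqrt{\tau})\bigr]^{2}$ emerges. Concretely, one uses the connection formula relating ${}_{2}F_{1}$ at argument near $1$ to its behavior, writes the Padé error à la Gonchar–Rakhmanov / saddle-point on the defining integral \eqref{nint}, and extracts the dominant exponential contribution together with the leading algebraic correction.

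The main obstacle I expect is controlling the $\mathcal{O}(1/k)$ remainder uniformly enough to state it cleanly, and in particular tracking the Gamma-function quotients $\binom{k+\alpha-1}{k-1}/\binom{k-\alpha}{k}$ (which appear in $\chi$ in \eqref{quoz}) through Stirling's formula: these behave like $1+\mathcal{O}(1/k)$ but only after the reflection-formula cancellation, and a careless estimate would leave a spurious $k^{\pm\alpha}$ factor. A secondary technical point is justifying that the saddle-point / Darboux analysis of the Gauss–Jacobi quadrature remainder — equivalently, of the hypergeometric tail — is valid uniformly for $\lambda$ in the relevant range and that the subdominant saddle does not interfere. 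Since the statement is explicitly attributed to \cite[Proposition 2]{AN0} and rests on the detailed hypergeometric analysis in \cite{E}, the cleanest route is to quote those two sources for the closed-form error and its asymptotics, and to present here only the reduction: closed form $\Rightarrow$ rescaling by $\tau$ $\Rightarrow$ identification of the conformal variable $\Rightarrow$ Stirling on the constant $\Rightarrow$ the stated formula \eqref{ek}.
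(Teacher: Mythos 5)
The paper offers no proof of this proposition: it is simply recalled from \cite[Proposition~2]{AN0}, resting on the hypergeometric error analysis of \cite{E}, which is precisely the route you ultimately recommend (quote the two sources and present only the reduction). Your sketch of the underlying mechanism --- closed-form hypergeometric error, rescaling by $\tau$, the conformal variable $(\lambda^{1/2}-\tau^{1/2})/(\lambda^{1/2}+\tau^{1/2})$, the reflection formula producing $2\sin(\alpha\pi)$, and Stirling on the binomial quotients --- is consistent with what is done in the cited reference, so the proposal matches the paper's approach.
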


Now, let
\begin{equation} \label{errek}
r_{k}(\lambda ):=\left( 1+h\lambda ^{\alpha }\right) ^{-1}-\mathcal{S}%
_{k-1,k}(\lambda ).
\end{equation}

\begin{proposition}
\label{p2}For large values of $k$, the following representation holds%
\[
r_{k}(\lambda )=\frac{2h\sin (\alpha \pi )\lambda ^{-\alpha }\left[ \frac{%
\lambda ^{1/2}-\tau ^{1/2}}{\lambda ^{1/2}+\tau ^{1/2}}\right] ^{2k}}{\left(
\lambda ^{-\alpha }+h\right) ^{2}}\left( 1+\mathcal{O}\left( \frac{1}{k}%
\right) \right) +\mathcal{O}\left( \left( e_{k}(\lambda )\right) ^{2}\right) .
\]
\end{proposition}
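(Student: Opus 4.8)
The plan is to start from the algebraic identity that connects the resolvent approximation to the approximation of the fractional power. Writing $g(\lambda):=\lambda^{-\alpha}$ and recalling from \eqref{eqa} that $(1+h\lambda^{\alpha})^{-1}=g(\lambda)/(g(\lambda)+h)$, while from \eqref{sk} we have $\mathcal{S}_{k-1,k}(\lambda)=\mathcal{R}_{k-1,k}(\lambda)/(\mathcal{R}_{k-1,k}(\lambda)+h)$, I would form the difference
\[
r_{k}(\lambda)=\frac{g(\lambda)}{g(\lambda)+h}-\frac{\mathcal{R}_{k-1,k}(\lambda)}{\mathcal{R}_{k-1,k}(\lambda)+h}.
\]
Putting this over a common denominator, the numerator telescopes: $g(\mathcal{R}+h)-\mathcal{R}(g+h)=h(g-\mathcal{R})=h\,e_{k}(\lambda)$, using \eqref{efp}. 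Hence
\[
r_{k}(\lambda)=\frac{h\,e_{k}(\lambda)}{\bigl(g(\lambda)+h\bigr)\bigl(\mathcal{R}_{k-1,k}(\lambda)+h\bigr)}
=\frac{h\,e_{k}(\lambda)}{\bigl(\lambda^{-\alpha}+h\bigr)\bigl(\lambda^{-\alpha}+h-e_{k}(\lambda)\bigr)}.
\]
This exact identity is the backbone of the argument and requires only elementary manipulation.

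The second step is to expand the denominator. Since $\mathcal{R}_{k-1,k}(\lambda)=\lambda^{-\alpha}-e_{k}(\lambda)$, I would write
\[
\frac{1}{\lambda^{-\alpha}+h-e_{k}(\lambda)}=\frac{1}{\lambda^{-\alpha}+h}\cdot\frac{1}{1-\dfrac{e_{k}(\lambda)}{\lambda^{-\alpha}+h}}
=\frac{1}{\lambda^{-\alpha}+h}\left(1+\frac{e_{k}(\lambda)}{\lambda^{-\alpha}+h}+\dots\right),
\]
so that
\[
r_{k}(\lambda)=\frac{h\,e_{k}(\lambda)}{\bigl(\lambda^{-\alpha}+h\bigr)^{2}}+\mathcal{O}\!\left(\bigl(e_{k}(\lambda)\bigr)^{2}\right).
\]
Finally I would substitute the asymptotic representation \eqref{ek} from Proposition \ref{p1} for $e_{k}(\lambda)$ into the leading term, which immediately yields the stated formula with the factor $2h\sin(\alpha\pi)\lambda^{-\alpha}[(\lambda^{1/2}-\tau^{1/2})/(\lambda^{1/2}+\tau^{1/2})]^{2k}/(\lambda^{-\alpha}+h)^{2}$ times $(1+\mathcal{O}(1/k))$.

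The main point requiring care — and the step I would treat most carefully — is the justification of the geometric-series expansion, i.e. that the remainder is genuinely $\mathcal{O}\bigl((e_{k}(\lambda))^{2}\bigr)$ uniformly enough to be useful. One must check that $e_{k}(\lambda)/(\lambda^{-\alpha}+h)$ is small; since $\lambda^{-\alpha}+h\ge h>0$ is bounded below while $e_{k}(\lambda)\to 0$ as $k\to\infty$ by Proposition \ref{p1}, the ratio is indeed small for large $k$, and one should note that the bound on $e_{k}$ in \eqref{ek} controls $|e_{k}(\lambda)|\le 2\sin(\alpha\pi)\lambda^{-\alpha}$ up to lower-order terms, with $\lambda^{-\alpha}\le c^{-\alpha}$ on $[c,+\infty)$, so everything is uniform in $\lambda$. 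I would also remark that the two error terms in the statement — the $\mathcal{O}(1/k)$ inside the bracket and the $\mathcal{O}((e_{k})^{2})$ — are kept separate precisely because the first comes from the asymptotics of $e_{k}$ itself (Proposition \ref{p1}) while the second comes from the nonlinear dependence of $\mathcal{S}_{k-1,k}$ on $\mathcal{R}_{k-1,k}$, and in the regime of interest $(e_{k})^{2}$ is typically negligible compared with $e_{k}/k$, so the displayed expression is sharp to leading order.
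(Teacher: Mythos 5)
Your proposal is correct and follows essentially the same route as the paper: the exact identity $r_k(\lambda)=h\,e_k(\lambda)/\bigl((\lambda^{-\alpha}+h)(\lambda^{-\alpha}+h-e_k(\lambda))\bigr)$, expansion of the second factor to extract the $\mathcal{O}\bigl((e_k(\lambda))^2\bigr)$ remainder, and substitution of Proposition~\ref{p1} into the leading term. Your additional remarks on the uniformity of the geometric-series step (using $\lambda^{-\alpha}+h\ge h$ and $\lambda^{-\alpha}\le c^{-\alpha}$ on $[c,+\infty)$) are a welcome refinement of what the paper leaves implicit, but do not change the argument.
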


\begin{proof}
By (\ref{efp}) we have%
\begin{equation}
\begin{split}
r_{k}(\lambda ) &=\frac{\lambda ^{-\alpha }}{\lambda ^{-\alpha }+h}-\frac{%
\mathcal{R}_{k-1,k}(\lambda )}{\mathcal{R}_{k-1,k}(\lambda )+h} \\
&=\frac{\lambda ^{-\alpha }}{\lambda ^{-\alpha }+h}-\frac{\lambda ^{-\alpha
}-e_{k}(\lambda )}{\lambda ^{-\alpha }-e_{k}(\lambda )+h} \\
&=\frac{he_{k}(\lambda )}{\left( \lambda ^{-\alpha }+h\right) \left(
\lambda ^{-\alpha }-e_{k}(\lambda )+h\right) } \\
&=\frac{he_{k}(\lambda )}{\left( \lambda ^{-\alpha }+h\right) ^{2}}+\mathcal{O} \left(
\left( e_{k}(\lambda )\right) ^{2}\right).
\end{split}
\end{equation}%
Therefore by Proposition \ref{p1} we find the result.
\end{proof}

In order to minimize  the error $E_k$ defined in (\ref{EK}), by Proposition~\ref{p2} the basic aim is now to study the nonnegative
function%
\begin{equation}
g_{k}(\lambda )=\frac{\lambda ^{-\alpha }\left[ \frac{\lambda ^{1/2}-\tau
^{1/2}}{\lambda ^{1/2}+\tau ^{1/2}}\right] ^{2k}}{\left( \lambda ^{-\alpha
}+h\right) ^{2}}  \label{gk}
\end{equation}%
and, in particular, to approximate the solution of%
\begin{equation}
\min_{\tau >0}\max_{\lambda \in \lbrack c,+\infty )}g_{k}(\lambda ).
\label{minimax}
\end{equation}

\begin{proposition} \label{p0}
The function $g_{k}(\lambda )$ given in (\ref{gk}) has the following
properties:

\begin{enumerate}
\item $g_{k}(\lambda )=0$ for $\lambda =\tau;$

\item $g_{k}(\lambda )\rightarrow 0$ for $\lambda \rightarrow 0^{+}$ and for 
$\lambda \rightarrow +\infty;$

\item $g_{k}(\lambda )$ has exactly two maximums $\lambda _{1}$ and $\lambda
_{2}$ such that%
\begin{equation*}
0<\lambda _{1} \lesssim \frac{\alpha ^{2}\tau }{4k^{2}},\quad \lambda_{2} \gtrsim \frac{4k^{2}\tau }{\alpha ^{2}}.
\end{equation*}
\end{enumerate}
\end{proposition}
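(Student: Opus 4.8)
The plan is to analyze $g_k$ by taking logarithms and studying critical points, exploiting the fact that the factor $\left[(\lambda^{1/2}-\tau^{1/2})/(\lambda^{1/2}+\tau^{1/2})\right]^{2k}$ vanishes at $\lambda=\tau$ to order $2k$, and is smooth and positive elsewhere on $(0,\infty)$. Properties (1) and (2) are essentially immediate: at $\lambda=\tau$ the bracketed factor is zero, so $g_k(\tau)=0$; as $\lambda\to 0^+$ the factor $[\cdots]^{2k}\to 1$ while $\lambda^{-\alpha}/(\lambda^{-\alpha}+h)^2\sim \lambda^\alpha\to 0$; and as $\lambda\to+\infty$ the factor $[\cdots]^{2k}\to 1$ while $\lambda^{-\alpha}/(\lambda^{-\alpha}+h)^2\sim \lambda^{-\alpha}/h^2\to 0$. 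So $g_k$ is continuous, nonnegative, vanishes at $0^+$, at $\tau$, and at $+\infty$, hence attains a positive maximum on each of the two intervals $(0,\tau)$ and $(\tau,+\infty)$.

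The heart of the proof is to show there is exactly one critical point in each of these two intervals and to locate them. I would write $\log g_k(\lambda) = -\alpha\log\lambda + 2k\log\left|\frac{\lambda^{1/2}-\tau^{1/2}}{\lambda^{1/2}+\tau^{1/2}}\right| - 2\log(\lambda^{-\alpha}+h)$ and differentiate. Using the substitution $s=\sqrt{\lambda/\tau}$ (so $\lambda=\tau s^2$) cleans things up: the middle term becomes $2k\log\left|\frac{s-1}{s+1}\right|$, whose $s$-derivative is $\frac{4k}{s^2-1}$; the first term contributes $-2\alpha/s$; and the last term contributes $\frac{2\alpha(\lambda^{-\alpha})}{\lambda(\lambda^{-\alpha}+h)}\cdot$(chain rule factor). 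Setting $\frac{d}{ds}\log g_k=0$ and clearing denominators produces an equation that, after multiplying through, is a polynomial (or at worst analytic) equation in $s$ whose relevant roots I must count. I expect that the dominant balance in each regime is between the $\frac{4k}{s^2-1}$ term and the $-2\alpha/s$ term (the $h$-term being lower order near $0$ and near $\infty$), giving $\frac{4k}{s^2-1}\approx \frac{2\alpha}{s}$, i.e. $s^2-1\approx \frac{2ks}{\alpha}$. For the large root this yields $s\approx 2k/\alpha$, hence $\lambda_2=\tau s^2\gtrsim 4k^2\tau/\alpha^2$; for the small root (where $s\to 0$, so $s^2-1\approx -1$) it yields $s\approx \alpha/(2k)$, hence $\lambda_1=\tau s^2\lesssim \alpha^2\tau/(4k^2)$. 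The inequalities (rather than asymptotic equalities) should come from carefully signing the omitted $h$-term and the lower-order corrections.

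The main obstacle will be rigorously establishing that there are exactly two maxima — i.e., ruling out extra critical points. One clean route: show $g_k'$ changes sign exactly once in $(0,\tau)$ and exactly once in $(\tau,\infty)$ by analyzing the sign of $\frac{d}{ds}\log g_k$, showing it is positive then negative on $(0,1)$ and positive then negative on $(1,\infty)$. This amounts to showing the auxiliary equation from clearing denominators has a unique admissible root in each of the $s$-intervals $(0,1)$ and $(1,\infty)$; since after clearing denominators one gets a low-degree polynomial in $s$ (degree governed by $k$-independent structure once the $h$-term is rationalized), Descartes-type sign counting or a monotonicity argument on a rearranged form $\frac{4k}{s^2-1}+\frac{\text{(bounded in }s\text{)}}{\cdot} = \frac{2\alpha}{s}$ should pin down the count. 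I would handle the two intervals separately, since the sign of $s^2-1$ flips, and in each case reduce to "a strictly monotone function equals a strictly monotone function of opposite monotonicity, hence one crossing." The bounds on $\lambda_1,\lambda_2$ then follow by evaluating the defining equation at the claimed thresholds $\alpha^2\tau/(4k^2)$ and $4k^2\tau/\alpha^2$ and checking the sign of $g_k'$ there.
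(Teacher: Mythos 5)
Your overall route is the same as the paper's: items (1)--(2) are immediate, and item (3) is obtained by setting the derivative of $g_k$ (equivalently of $\log g_k$) to zero and counting/locating the roots of the resulting equation. Indeed, with $s=\sqrt{\lambda/\tau}$ your critical-point equation $-\tfrac{2\alpha}{s}+\tfrac{4k}{s^2-1}+\tfrac{4\alpha}{s}\cdot\tfrac{\lambda^{-\alpha}}{\lambda^{-\alpha}+h}=0$ is exactly the paper's equation (\ref{ab}) rearranged, since the last two non-$k$ terms combine into $\tfrac{2\alpha}{s}\cdot\tfrac{\lambda^{-\alpha}-h}{\lambda^{-\alpha}+h}$. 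However, there is a concrete error in your asymptotic analysis of the small root: the claim that the $h$-term is lower order near $\lambda\to 0^+$ is false. There $\lambda^{-\alpha}/(\lambda^{-\alpha}+h)\to 1$, so that term is $\tfrac{4\alpha}{s}(1+o(1))$, the same order as $-\tfrac{2\alpha}{s}$ and twice its size; it flips the net sign to $+\tfrac{2\alpha}{s}$ and is precisely what makes a root in $(0,1)$ exist. With your stated balance $\tfrac{4k}{s^2-1}\approx\tfrac{2\alpha}{s}$ and $s^2-1\approx -1$ you would get $s\approx-\alpha/(2k)<0$, a contradiction; the correct balance is $\tfrac{4k}{1-s^2}\approx\tfrac{2\alpha}{s}$, which then gives $s\approx\alpha/(2k)$ as claimed. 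The large-root analysis ($u\to 0$, $s\approx 2k/\alpha$) is fine.

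A second, smaller issue: after clearing denominators the equation is not polynomial in $s$, because $\lambda^{-\alpha}=(\tau s^2)^{-\alpha}$ is transcendental for general $\alpha\in(0,1)$, so Descartes-type sign counting does not apply directly. The clean way to close the uniqueness argument --- and the one the paper takes --- is to isolate $\lambda^{-\alpha}$ on one side, writing the critical-point condition as $\lambda^{-\alpha}=d(\lambda)$ with $d(\lambda)=h\,\tfrac{\alpha(1-\tau/\lambda)-2k(\tau/\lambda)^{1/2}}{\alpha(1-\tau/\lambda)+2k(\tau/\lambda)^{1/2}}$, a ratio of two parabolas in $\lambda^{1/2}$. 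Then $d$ is increasing on each side of its pole $\lambda^{\ast}=\tau\bigl(\tfrac{-k+\sqrt{k^2+\alpha^2}}{\alpha}\bigr)^2\sim\tfrac{\alpha^2\tau}{4k^2}$, vanishes at $\lambda^{\ast\ast}=\tau\bigl(\tfrac{k+\sqrt{k^2+\alpha^2}}{\alpha}\bigr)^2\sim\tfrac{4k^2\tau}{\alpha^2}$, and tends to $h$ at $0^+$ and $+\infty$; intersecting it with the positive decreasing function $\lambda^{-\alpha}$ gives exactly one root $\lambda_1<\lambda^{\ast}$ and one root $\lambda_2>\lambda^{\ast\ast}$, which yields both the count and the stated bounds at once, without any separate signing of lower-order corrections. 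Your "monotone vs.\ monotone of opposite monotonicity" suggestion is essentially this; I recommend carrying it out in that explicit form rather than via the dominant-balance heuristic, which as written is inconsistent on $(0,\tau)$.
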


\begin{proof}
Items (1) and (2) are obvious. As for item (3) the study of $\frac{d}{d\lambda }%
g_{k}(\lambda )=0$, after some algebra, leads to the equation%
\begin{equation}
\lambda ^{-\alpha }=h\frac{\alpha \left( 1-\frac{\tau }{\lambda }\right)
-2k\left( \frac{\tau }{\lambda }\right) ^{1/2}}{\alpha \left( 1-\frac{\tau }{%
\lambda }\right) +2k\left( \frac{\tau }{\lambda }\right) ^{1/2}}.  \label{ab}
\end{equation}%
The function on the right%
\begin{equation*}
d(\lambda ):=h\frac{\alpha \left( 1-\frac{\tau }{\lambda }\right) -2k\left( 
\frac{\tau }{\lambda }\right) ^{1/2}}{\alpha \left( 1-\frac{\tau }{\lambda }%
\right) +2k\left( \frac{\tau }{\lambda }\right) ^{1/2}}
\end{equation*}%
is the ratio of two parabolas in the variable $\lambda ^{1/2}$. Moreover $%
d(\lambda )\rightarrow h$ for $\lambda \rightarrow 0^{+}$ and for $\lambda
\rightarrow +\infty $, and it is not defined (in $[0,+\infty )$) at%
\begin{equation*}
\lambda ^{\ast }=\tau \left( \frac{-k+\sqrt{k^{2}+\alpha ^{2}}}{\alpha }%
\right) ^{2}\sim \frac{\alpha ^{2}\tau }{4k^{2}}.
\end{equation*}%
Moreover $d(\lambda )=0$ for%
\begin{equation*}
\lambda ^{\ast \ast }=\tau \left( \frac{k+\sqrt{k^{2}+\alpha ^{2}}}{\alpha }%
\right) ^{2}\sim \frac{4k^{2}\tau }{\alpha ^{2}}.
\end{equation*}%
Therefore starting from the point with coordinates $(0,h)$, $d(\lambda )$ is
growing and $d(\lambda ) \rightarrow +\infty $ for $\lambda \rightarrow \lambda ^{\ast -}.$
Moreover,  $d(\lambda ) \rightarrow -\infty $ for $\lambda \rightarrow \lambda ^{\ast +}$. From $%
\lambda ^{\ast }$ to $+\infty $ the function $d(\lambda )$ is still growing,
and $d(\lambda )<0$ for $\lambda \in \left( \lambda ^{\ast },\lambda ^{\ast
\ast }\right) $ and $d(\lambda )>0$ for $\lambda >\lambda ^{\ast \ast }$. As
consequence the equation (\ref{ab}) has exactly two solutions, $\lambda
_{1}<\lambda ^{\ast }$ and $\lambda _{2}>\lambda ^{\ast \ast }$.
\end{proof}

\begin{proposition}
\label{p4}For the maximum $\lambda _{2}$ it holds%
\begin{equation}
\lambda _{2}\sim \overline{\lambda }_{2}:=s_{k}\frac{4k^{2}\tau }{\alpha ^{2}%
},  \label{c0}
\end{equation}%
where%
\begin{equation}
s_{k}=\exp \left( \frac{1}{\alpha }W\left( \frac{4\alpha }{h\left( \frac{%
4k^{2}\tau }{\alpha ^{2}}\right) ^{\alpha }}\right) \right) ,  \label{ad}
\end{equation}%
and $W$ denotes the Lambert-W function.
\end{proposition}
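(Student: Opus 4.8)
The plan is to start from Proposition~\ref{p0}, which guarantees that $\lambda_2$ is the \emph{unique} solution of~(\ref{ab}) in $(\lambda^{\ast\ast},+\infty)$, and to analyze it after the rescaling $\lambda_2=\sigma_k\,\frac{4k^2\tau}{\alpha^2}$; proving~(\ref{c0})--(\ref{ad}) then amounts to showing $\sigma_k\sim s_k$. The first step is to check that $\sigma_k\to1$. The lower bound $\sigma_k>\bigl(\frac{k+\sqrt{k^2+\alpha^2}}{2k}\bigr)^{2}$ is immediate from $\lambda_2>\lambda^{\ast\ast}$. For the upper bound, fix an arbitrary $C>1$ and evaluate the two sides of~(\ref{ab}) at $\lambda=C\,\frac{4k^2\tau}{\alpha^2}$: there the left-hand side equals $C^{-\alpha}\beta_k\to0$, where $\beta_k:=\bigl(\frac{4k^2\tau}{\alpha^2}\bigr)^{-\alpha}$, whereas $d(\lambda)\to h\,\frac{\sqrt C-1}{\sqrt C+1}>0$; since $\lambda^{-\alpha}$ is decreasing and $d$ is increasing on $(\lambda^{\ast\ast},+\infty)$ (as in the proof of Proposition~\ref{p0}), for $k$ large this forces $\lambda_2<C\,\frac{4k^2\tau}{\alpha^2}$, i.e. $\limsup_k\sigma_k\le C$, and letting $C\downarrow1$ gives $\sigma_k\to1$. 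Observe that the argument of the Lambert function in~(\ref{ad}) is $z_k:=\frac{4\alpha\beta_k}{h}\to0$.

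Next I would insert $\lambda_2=\sigma_k\,\frac{4k^2\tau}{\alpha^2}$ into~(\ref{ab}). Using $\tau/\lambda_2=\frac{\alpha^2}{4k^2\sigma_k}$ and $2k(\tau/\lambda_2)^{1/2}=\alpha\,\sigma_k^{-1/2}$, the equation takes the exact form
\[
\sigma_k^{-\alpha}\beta_k=h\,\frac{\bigl(1-\sigma_k^{-1/2}\bigr)-\frac{\alpha^2}{4k^2\sigma_k}}{\bigl(1+\sigma_k^{-1/2}\bigr)-\frac{\alpha^2}{4k^2\sigma_k}} .
\]
Since $\sigma_k\to1$ the denominator tends to $2$ and $\sigma_k^{-\alpha}\to1$, so reading off the numerator gives $\bigl(1-\sigma_k^{-1/2}\bigr)-\frac{\alpha^2}{4k^2\sigma_k}\asymp\beta_k\asymp k^{-2\alpha}$; because $\alpha<1$ the term $\frac{\alpha^2}{4k^2\sigma_k}=O(k^{-2})$ is of strictly smaller order, hence $1-\sigma_k^{-1/2}\asymp k^{-2\alpha}$ and that term may be dropped. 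Therefore~(\ref{ab}) reduces to the asymptotic relation $\sigma_k^{-\alpha}\beta_k\sim h\,\frac{1-\sigma_k^{-1/2}}{1+\sigma_k^{-1/2}}$, which in particular already yields $\sigma_k-1\sim\frac{4\beta_k}{h}$.

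Finally I would solve this reduced relation in closed form. Setting $m_k:=\sigma_k^{-1/2}\in(0,1)$ and writing $m_k=e^{-\phi_k}$ with $\phi_k\to0^{+}$, one has $\frac{1-m_k}{1+m_k}=\tanh(\phi_k/2)\sim\phi_k/2$ and $\sigma_k^{-\alpha}=m_k^{2\alpha}=e^{-2\alpha\phi_k}$, so the relation becomes $\beta_k e^{-2\alpha\phi_k}\sim\frac h2\,\phi_k$, i.e. $2\alpha\phi_k\,e^{2\alpha\phi_k}\sim\frac{4\alpha\beta_k}{h}=z_k$. Since $2\alpha\phi_k\to0$ and the Lambert function satisfies $W(z)\sim z$ as $z\to0^{+}$, this gives $2\alpha\phi_k\sim W(z_k)$, whence $\sigma_k=e^{2\phi_k}\sim e^{W(z_k)/\alpha}=s_k$, the last equivalence being legitimate because $\phi_k$ and $W(z_k)$ both tend to $0$. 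Multiplying back by $\frac{4k^2\tau}{\alpha^2}$ yields $\lambda_2\sim\overline{\lambda}_2$. I expect the only genuinely delicate point to be the justification, in the middle step, that the discarded $O(k^{-2})$ terms in~(\ref{ab}) are of strictly smaller order than the surviving ones — which rests on the estimate $\sigma_k-1\asymp k^{-2\alpha}$ and crucially on the hypothesis $\alpha<1$; the remaining manipulations are routine asymptotic bookkeeping together with elementary properties of the Lambert-$W$ function.
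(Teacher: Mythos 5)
Your proof is correct and follows essentially the same route as the paper's: rescale $\lambda_2=s\,\frac{4k^{2}\tau}{\alpha^{2}}$, show $s\to 1$, reduce (\ref{ab}) to $\bigl(s\frac{4k^{2}\tau}{\alpha^{2}}\bigr)^{-\alpha}=h\frac{\ln s}{4}$ (your substitution $\sigma_k=e^{2\phi_k}$ with $\tanh(\phi_k/2)\sim\phi_k/2$ is exactly the paper's approximation $\frac{1-1/\sqrt{s}}{1+1/\sqrt{s}}\sim\frac{\ln s}{4}$), and resolve it via the Lambert-$W$ function. The only cosmetic differences are that you establish $\sigma_k\to1$ by a squeeze/comparison argument where the paper invokes concavity of $d$ beyond $\lambda^{\ast\ast}$, and that you solve the reduced relation asymptotically using $W(z)\sim z$ rather than exactly.
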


\begin{proof}
Since $\lambda _{2}>\lambda ^{\ast \ast }>\frac{4k^{2}\tau }{\alpha ^{2}}$,
there exists $s>1$ such that%
\begin{equation}
\lambda _{2}=s\frac{4k^{2}\tau }{\alpha ^{2}}.  \label{l2}
\end{equation}%
Therefore 
\begin{equation*}
\left( \frac{\tau }{\lambda _{2}}\right) ^{1/2}=\frac{1}{\sqrt{s}}\frac{%
\alpha }{2k}.
\end{equation*}%
Using (\ref{ab}), for large values of $k$ we find%
\begin{equation}
\frac{\alpha \left( 1-\frac{\tau }{\lambda }\right) -2k\left( \frac{\tau }{%
\lambda }\right) ^{1/2}}{\alpha \left( 1-\frac{\tau }{\lambda }\right)
+2k\left( \frac{\tau }{\lambda }\right) ^{1/2}} =\frac{\alpha \left( 1-%
\frac{1}{s}\frac{\alpha ^{2}}{4k^{2}}\right) -\frac{\alpha }{\sqrt{s}}}{%
\alpha \left( 1-\frac{1}{s}\frac{\alpha ^{2}}{4k^{2}}\right) +\frac{\alpha }{%
\sqrt{s}}} \sim \frac{1-\frac{1}{\sqrt{s}}}{1+\frac{1}{\sqrt{s}}}.
\end{equation}%
Moreover, since the function $d(\lambda )$ is concave for $\lambda >\lambda
^{\ast \ast }$ and $d(\lambda )\rightarrow h$ as $\lambda \rightarrow
+\infty $ we have that $s\rightarrow 1$ as $k\rightarrow +\infty $, so that
we can use the approximation%
\begin{equation}
\frac{1-\frac{1}{\sqrt{s}}}{1+\frac{1}{\sqrt{s}}}\sim \frac{\ln s}{4}.
\label{qq}
\end{equation}%
By (\ref{ab}) and (\ref{l2}) we then have to solve%
\begin{equation}
\left( s\frac{4k^{2}\tau }{\alpha ^{2}}\right) ^{-\alpha }=h\frac{\ln s}{4},
\label{ss}
\end{equation}%
whose solution is given by (\ref{ad}).
\end{proof}

\begin{remark}
Since $W(x)=x+\mathcal{O}(x^{2})$ for $x$ close to $0$ (see, e.g., \cite[Eq. (3.1)]{CGHJK}), for $s_{k}$ defined in (\ref%
{ad}) we have%
\begin{equation}
s_{k}\sim \exp \left( \frac{4}{h}\left( \frac{\alpha ^{2}}{4k^{2}\tau }%
\right) ^{\alpha }\right) .  \label{ae}
\end{equation}
\end{remark}

\begin{proposition}
\label{p5} For the function $g_k$ defined in (\ref{gk}) it holds%
\begin{equation*}
g_{k}(\lambda _{2})\sim \frac{1}{h^{2}}\left( \frac{\alpha ^{2}}{4k^{2}\tau }%
\right) ^{\alpha }\exp (-2\alpha ).
\end{equation*}
\end{proposition}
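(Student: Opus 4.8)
The plan is to evaluate separately the three factors making up $g_{k}$ in \eqref{gk} at the maximizer $\lambda_{2}$, using the asymptotic description of $\lambda_{2}$ from Proposition~\ref{p4}, and then to multiply the resulting equivalences. By Proposition~\ref{p4} and \eqref{ae} one has $\lambda_{2}\sim\overline{\lambda}_{2}=s_{k}\,\frac{4k^{2}\tau}{\alpha^{2}}$ with $s_{k}\to 1$ as $k\to\infty$, since the argument of the exponential in \eqref{ae} is $\mathcal{O}(k^{-2\alpha})$; consequently $(\tau/\lambda_{2})^{1/2}\sim\frac{\alpha}{2k\sqrt{s_{k}}}=\frac{\alpha}{2k}(1+o(1))$.

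First I would handle the power factor, which is the delicate one. Writing $\left[\frac{\lambda^{1/2}-\tau^{1/2}}{\lambda^{1/2}+\tau^{1/2}}\right]^{2k}=\left[\frac{1-(\tau/\lambda)^{1/2}}{1+(\tau/\lambda)^{1/2}}\right]^{2k}$ and taking logarithms, the expansion $\ln\frac{1-x}{1+x}=-2x+\mathcal{O}(x^{3})$ with $x=(\tau/\lambda_{2})^{1/2}=\mathcal{O}(1/k)$ gives $2k\ln\left[\frac{1-x}{1+x}\right]=-\frac{2\alpha}{\sqrt{s_{k}}}+o(1)\to-2\alpha$, whence $\left[\frac{\lambda_{2}^{1/2}-\tau^{1/2}}{\lambda_{2}^{1/2}+\tau^{1/2}}\right]^{2k}\sim\exp(-2\alpha)$.

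The remaining two factors are routine: from $\lambda_{2}\sim s_{k}\frac{4k^{2}\tau}{\alpha^{2}}$ and $s_{k}\to 1$ we obtain $\lambda_{2}^{-\alpha}\sim\left(\frac{\alpha^{2}}{4k^{2}\tau}\right)^{\alpha}\to 0$, hence $(\lambda_{2}^{-\alpha}+h)^{2}\sim h^{2}$. Multiplying the three contributions gives $g_{k}(\lambda_{2})\sim\frac{1}{h^{2}}\left(\frac{\alpha^{2}}{4k^{2}\tau}\right)^{\alpha}\exp(-2\alpha)$, as claimed.

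The step I expect to be the main obstacle is to justify rigorously that the exact maximizer $\lambda_{2}$ may be replaced by its asymptotic surrogate $\overline{\lambda}_{2}$ inside the $2k$-th power, because a priori a relative perturbation $\varepsilon_{k}$ of $\lambda$ is amplified by the exponent $2k$. This turns out to be harmless: since $\frac{d}{d\lambda}\left(2k\ln\frac{\lambda^{1/2}-\tau^{1/2}}{\lambda^{1/2}+\tau^{1/2}}\right)=\frac{2k\tau^{1/2}}{\lambda^{1/2}(\lambda-\tau)}$ is of order $k^{-2}\tau^{-1}$ at $\lambda=\overline{\lambda}_{2}$, the change it induces in the logarithm is of order $k^{-2}\tau^{-1}\cdot\overline{\lambda}_{2}\,\varepsilon_{k}=\mathcal{O}(\varepsilon_{k})\to 0$; equivalently, $\lambda_{2}$ being a critical point of $g_{k}$, the function is flat there and insensitive to an $o(1)$ relative error in its argument. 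With this in hand, the rest is just composition of $\sim$-equivalences with continuous maps (powers, sum with a constant, squaring).
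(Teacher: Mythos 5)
Your proof is correct and follows essentially the same route as the paper: evaluate the three factors of $g_{k}$ at $\overline{\lambda}_{2}=s_{k}\,\frac{4k^{2}\tau}{\alpha^{2}}$, use $(\tau/\overline{\lambda}_{2})^{1/2}=\frac{\alpha}{2k\sqrt{s_{k}}}$ to get the $2k$-th power $\sim\exp(-2\alpha/\sqrt{s_{k}})\to\exp(-2\alpha)$, and use $\overline{\lambda}_{2}^{-\alpha}\to 0$ to replace the denominator by $h^{2}$. The only addition is your closing remark justifying the substitution of $\lambda_{2}$ by $\overline{\lambda}_{2}$ inside the $2k$-th power via the smallness of the logarithmic derivative at the critical point; the paper leaves this step implicit, so this is a welcome but not divergent refinement.
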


\begin{proof}
Observe first that $s_{k}\rightarrow 1$ for $k\rightarrow +\infty $ (cf. (%
\ref{ad}) and (\ref{ae})), and hence by (\ref{c0})%
\begin{equation}
\begin{split}
\frac{\overline{\lambda }_{2}^{-\alpha }}{\left( \overline{\lambda }%
_{2}^{-\alpha }+h\right) ^{2}} &=\frac{1}{\left( \left( s_{k}\frac{%
4k^{2}\tau }{\alpha ^{2}}\right) ^{-\alpha }+h\right) ^{2}}\left( s_{k}\frac{%
4k^{2}\tau }{\alpha ^{2}}\right) ^{-\alpha } \\
&\sim \frac{1}{h^{2}}\left( \frac{\alpha ^{2}}{4k^{2}\tau }\right) ^{\alpha}.
\end{split}
\end{equation}%
Moreover,%
\begin{equation}
\left[ \frac{\overline{\lambda }_{2}^{1/2}-\tau ^{1/2}}{\overline{\lambda }%
_{2}^{1/2}+\tau ^{1/2}}\right] ^{2k} =\left[ \frac{1-\frac{\alpha }{2k%
\sqrt{s_{k}}}}{1+\frac{\alpha }{2k\sqrt{s_{k}}}}\right] ^{2k} \sim \exp \left( -\frac{2\alpha }{\sqrt{s_{k}}}\right) .
\end{equation}
The result then follows from (\ref{gk}).
\end{proof}

At this point we need to remember that our aim is to solve (\ref{minimax}).
By Proposition \ref{p0} we have that%
\begin{equation*}
\max_{\lambda \in \lbrack c,+\infty )}g_{k}(\lambda )=\max \left\{
g_{k}(\lambda _{1}),g_{k}(\lambda _{2})\right\} .
\end{equation*}%
Moreover, since $\lambda _{1}\rightarrow 0$ for $k\rightarrow +\infty $, for 
$k$ large enough we have%
\begin{equation*}
\max_{\lambda \in \lbrack c,+\infty )}g_{k}(\lambda )=\max \left\{
g_{k}(c),g_{k}(\lambda _{2})\right\} .
\end{equation*}%
Since we need to minimize the above quantity with respect to $\tau $, let us
consider the functions%
\begin{equation}
\varphi _{1}(\tau )=\frac{c^{-\alpha }\left[ \frac{c^{1/2}-\tau ^{1/2}}{%
c^{1/2}+\tau ^{1/2}}\right] ^{2k}}{\left( c^{-\alpha }+h\right) ^{2}}%
=g_{k}(c)  \label{ph}
\end{equation}%
and%
\begin{equation*}
\varphi _{2}(\tau )=\left( \frac{\alpha ^{2}}{4k^{2}\tau }\right) ^{\alpha }%
\frac{1}{h^{2}}\exp (-2\alpha )\sim g_{k}(\lambda _{2}).
\end{equation*}

It is easy to see that $\varphi _{1}(\tau )$ is monotone increasing for $%
\tau >c$, whereas $\varphi _{2}(\tau )$ is monotone decreasing. Therefore,
for $k$ large enough, the exact solution $\widetilde{\tau }$ of (\ref%
{minimax}) can be approximated by solving $\varphi _{1}(\tau )=\varphi
_{2}(\tau )$.

\begin{proposition}
\label{p6} Let 
\begin{equation}
\phi _{k}=\frac{\alpha}{2ke} \left( \frac{c^{-\alpha }+h}{h}\right)
^{1/\alpha}.  \label{phik}
\end{equation}
For $k$ large enough the solution of $\varphi _{1}(\tau )=\varphi _{2}(\tau
) $ is approximated by%
\begin{equation}
\tau _{k}:=c \, \phi _{k}^2 \exp \left( 2W\left( \frac{2k}{\phi _{k} \alpha }%
\right) \right).  \label{tauk}
\end{equation}
\end{proposition}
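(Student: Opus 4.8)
The plan is to solve directly the balance equation $\varphi_{1}(\tau)=\varphi_{2}(\tau)$ identified just above the statement, reducing it by elementary algebra to a single transcendental equation solvable by the Lambert-$W$ function. The first step is to simplify $\varphi_{1}$: since $\varphi_{1}$ is increasing and $\varphi_{2}$ decreasing for $\tau>c$, the crossing point lies above $c$, and (as will be verified a posteriori) in fact far above it, so that with $v:=(c/\tau)^{1/2}\to 0^{+}$ one may use the same device as in (\ref{qq}), namely $\tfrac{1-x}{1+x}\sim e^{-2x}$ as $x\to 0^{+}$, to replace
\[
\left[\frac{c^{1/2}-\tau^{1/2}}{c^{1/2}+\tau^{1/2}}\right]^{2k}
=\left[\frac{1-v}{1+v}\right]^{2k}\sim\exp(-4kv).
\]
With this, recalling (\ref{ph}) and the formula defining $\varphi_{2}$, the equation $\varphi_{1}(\tau)=\varphi_{2}(\tau)$ becomes
\[
\frac{c^{-\alpha}}{(c^{-\alpha}+h)^{2}}\,e^{-4kv}
=\left(\frac{\alpha^{2}}{4k^{2}\tau}\right)^{\!\alpha}\frac{e^{-2\alpha}}{h^{2}}.
\]

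Next I would clear constants and insert the definition (\ref{phik}) of $\phi_{k}$. Using $\dfrac{(c^{-\alpha}+h)^{2}}{c^{-\alpha}h^{2}}=c^{\alpha}\left(\dfrac{c^{-\alpha}+h}{h}\right)^{2}$ and, from (\ref{phik}), $\left(\dfrac{c^{-\alpha}+h}{h}\right)^{2}=\left(\dfrac{2ke\phi_{k}}{\alpha}\right)^{2\alpha}$, and then substituting $\tau=c/v^{2}$, all the $\alpha$-th powers collect and the equation collapses to $e^{-4kv}=(\phi_{k}v)^{2\alpha}$, equivalently (extracting the positive $2\alpha$-th root)
\[
\exp\!\left(-\frac{2k}{\alpha}\,v\right)=\phi_{k}\,v .
\]
Setting $w=(2k/\alpha)v$ this reads $we^{w}=2k/(\phi_{k}\alpha)$, whose unique positive root is $w=W\!\bigl(2k/(\phi_{k}\alpha)\bigr)$; hence $v=\tfrac{\alpha}{2k}W\!\bigl(2k/(\phi_{k}\alpha)\bigr)$ and $\tau=c/v^{2}=4k^{2}c/\bigl(\alpha^{2}W(2k/(\phi_{k}\alpha))^{2}\bigr)$, which, rewritten through the identity $e^{W(x)}=x/W(x)$, is exactly $\tau_{k}=c\,\phi_{k}^{2}\exp\bigl(2W(2k/(\phi_{k}\alpha))\bigr)$ of (\ref{tauk}).

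It remains to close the loop by recording the consistency check that legitimizes the approximation made in the first step. Since $\phi_{k}$ is a $k$-independent constant times $1/k$, the argument $2k/(\phi_{k}\alpha)$ grows like $k^{2}$, so $W\!\bigl(2k/(\phi_{k}\alpha)\bigr)\sim 2\ln k$ and $\tau_{k}\sim c\,k^{2}/\bigl(\alpha^{2}(\ln k)^{2}\bigr)\to+\infty$. In particular $\tau_{k}>c$ for $k$ large, $v=(c/\tau_{k})^{1/2}\to 0$, and $kv^{3}\to 0$, the last being precisely what guarantees $[(1-v)/(1+v)]^{2k}\sim e^{-4kv}$ (the dropped factor being $\exp(\mathcal{O}(kv^{3}))=1+o(1)$), and it is also compatible with $\lambda_{1}\to 0$ and with $\varphi_{2}\sim g_{k}(\lambda_{2})$ from Proposition \ref{p5}. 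The one genuinely delicate point I anticipate is the bookkeeping of the several nested $\sim$-relations — those defining $\varphi_{1}$ and $\varphi_{2}$ (Propositions \ref{p4}--\ref{p5}) and then the replacement of the Blaschke factor — so that none of them perturbs the leading-order balance; once that is granted, what remains is routine algebra and a single appeal to the Lambert-$W$ function.
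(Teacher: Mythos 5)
Your proposal is correct and follows essentially the same route as the paper: reduce $\varphi_1(\tau)=\varphi_2(\tau)$ to $e^{-4kx}=(\phi_k x)^{2\alpha}$ with $x=(c/\tau)^{1/2}$ via the Blaschke-to-exponential approximation, and solve with the Lambert-$W$ function. Your closing consistency check (that $\tau_k\to\infty$, $x\to 0$, and $kx^3\to 0$) is a slightly more careful justification of the step the paper dispatches with the remark that the exact solution goes to infinity with $k$, but the substance is identical.
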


\begin{proof}
By (\ref{phik}), the equation $\varphi _{1}(\tau )=\varphi _{2}(\tau )$
leads to%
\begin{equation}
\left( \frac{c}{\tau }\right) ^{-\alpha }\left[ \frac{c^{1/2}-\tau ^{1/2}}{%
c^{1/2}+\tau ^{1/2}}\right] ^{2k}=\phi _{k}^{2\alpha }.  \label{eq1}
\end{equation}%
Since the exact solution of (\ref{eq1}) goes to infinity with $k$, we set%
\begin{equation}
x:=\left( \frac{c}{\tau }\right) ^{1/2}<1,  \label{rt}
\end{equation}%
so that by (\ref{eq1}) we obtain%
\begin{equation}
x^{-2\alpha }\left[ \frac{1-x}{1+x}\right] ^{2k}=\phi _{k}^{2\alpha }.
\label{eq2}
\end{equation}%
Since $(1+x)^{-1} = 1-x + \mathcal{O}{(x^2),}$ using the approximation%
\begin{equation}
\left[ \frac{1-x}{1+x}\right] ^{2k}\sim \exp \left( -4kx\right) 
\label{apprexp}
\end{equation}%
we then want to solve%
\begin{equation*}
\exp \left( -4kx\right) =(\phi _{k}x)^{2\alpha },
\end{equation*}%
whose solution, approximation to the one of (\ref{eq2}), is given by%
\begin{equation*}
\overline{x}:=\left[ \phi _{k}\exp \left( W\left( \frac{2k}{\phi _{k}\alpha }%
\right) \right) \right] ^{-1}.
\end{equation*}%
The result then follows immediately from (\ref{rt}).
\end{proof}

In Figure~\ref{fig1} we consider the graphical interpretation of the analysis that leads us to the definition of $\tau_k$ in Proposition~\ref{p6}. Assuming to work with a spectrum contained in $[c, +\infty),$ with $c=1,$ we define $\tau_k$ by solving $g_k(c)=g_k(\lambda_2).$ As already pointed out, the leftmost maximum $\lambda_1$ becomes smaller than $c$ for $k$ large enough.

\begin{figure}
\includegraphics[width=0.90\textwidth]{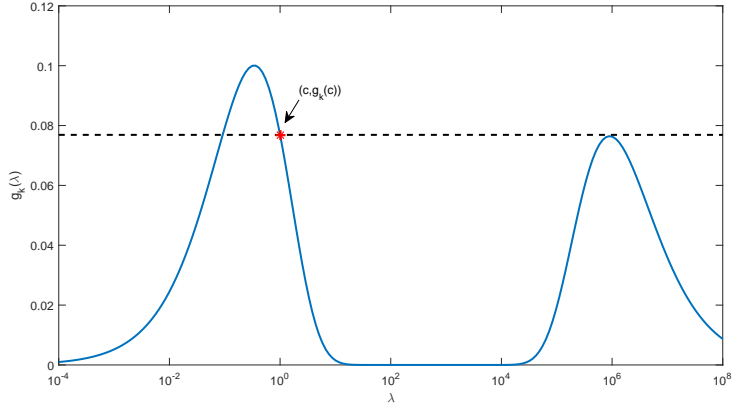}
\caption{Behavior of the function $g_k$ defined in (\ref{gk}) versus $\lambda$ (in logarithmic scale) for $\alpha=0.75, k=15, h=10^{-2},$  $\tau_k$  as defined in (\ref{tauk}) and $c=1.$}
\label{fig1}
\end{figure}

Finally, we are on the point to give the following result, that provides an error estimate
since (see (\ref{EK}) and (\ref{errek}))
\[
\left\Vert \left( I+h\mathcal{L}^{\alpha }\right) ^{-1}-\mathcal{S}%
_{k-1,k}(\mathcal{L})\right\Vert _{\mathcal{H}\rightarrow \mathcal{H}} \le \max_{\lambda \in \lbrack c,+\infty )} |r_{k}(\lambda )|.
\]

\begin{theorem}
\label{th1} Let $\tau _{k}$ be defined according to (\ref{tauk}). Then for $k
$ large enough%
\begin{equation*}
\max_{\lambda \in \lbrack c,+\infty )}|r_{k}(\lambda )|\sim \frac{2\sin
(\alpha \pi )c^{-\alpha }}{h}\left( \frac{2ke^{1/2}}{\alpha }\right)
^{-4\alpha }\left( \ln \left[ \frac{4k^{2}e}{\alpha ^{2}}\left( \frac{h}{%
c^{-\alpha }+h}\right) ^{\frac{1}{\alpha }}\right] \right) ^{2\alpha }.
\end{equation*}
\end{theorem}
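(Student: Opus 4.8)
The plan is to combine the error representation from Proposition~\ref{p2} with the minimax analysis developed in Propositions~\ref{p5} and~\ref{p6}. By Proposition~\ref{p2}, for large $k$ we have $|r_k(\lambda)| \sim 2h\sin(\alpha\pi)\, g_k(\lambda)$ uniformly on $[c,+\infty)$, so that $\max_{\lambda\in[c,+\infty)}|r_k(\lambda)| \sim 2h\sin(\alpha\pi)\max_{\lambda}g_k(\lambda)$. As explained in the text preceding the theorem, for $k$ large enough $\lambda_1 < c$, so $\max_\lambda g_k(\lambda) = \max\{g_k(c),\, g_k(\lambda_2)\}$, and with $\tau = \tau_k$ chosen so that $\varphi_1(\tau_k) = \varphi_2(\tau_k)$ (equivalently $g_k(c) \sim g_k(\lambda_2)$) this maximum is $\sim g_k(\lambda_2)$, which by Proposition~\ref{p5} equals $\frac{1}{h^2}\left(\frac{\alpha^2}{4k^2\tau_k}\right)^{\alpha}\exp(-2\alpha)$. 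Hence the whole problem reduces to evaluating $\left(\frac{\alpha^2}{4k^2\tau_k}\right)^{\alpha}$ for the explicit $\tau_k$ of~(\ref{tauk}) and simplifying.

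The main computational step is therefore to substitute $\tau_k = c\,\phi_k^2 \exp\!\left(2W\!\left(\frac{2k}{\phi_k\alpha}\right)\right)$ and extract a clean asymptotic form. First I would write
\[
\frac{\alpha^2}{4k^2\tau_k} = \frac{\alpha^2}{4k^2 c\,\phi_k^2}\exp\!\left(-2W\!\left(\tfrac{2k}{\phi_k\alpha}\right)\right).
\]
The key is to handle the Lambert-$W$ term: as $k\to\infty$ the argument $\frac{2k}{\phi_k\alpha} = \frac{4k^2 e}{\alpha^2}\left(\frac{h}{c^{-\alpha}+h}\right)^{1/\alpha}$ (using the definition~(\ref{phik}) of $\phi_k$) tends to $+\infty$, so I would use the large-argument expansion $W(y) \sim \ln y$ — more precisely $W(y) = \ln y - \ln\ln y + o(1)$, but to leading order $\exp(W(y)) = y/W(y) \sim y/\ln y$. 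Thus $\exp\!\left(-2W(2k/(\phi_k\alpha))\right) \sim \left(\frac{\ln(2k/(\phi_k\alpha))}{2k/(\phi_k\alpha)}\right)^2 = \frac{\phi_k^2\alpha^2}{4k^2}\left(\ln\frac{2k}{\phi_k\alpha}\right)^2$, and the $\phi_k^2$ factors cancel against the $1/\phi_k^2$ already present, leaving
\[
\frac{\alpha^2}{4k^2\tau_k} \sim \frac{\alpha^4}{16 k^4 c}\left(\ln\frac{2k}{\phi_k\alpha}\right)^2.
\]
Raising to the power $\alpha$, multiplying by $\exp(-2\alpha)/h^2$, and then by $2h\sin(\alpha\pi)$ should, after collecting the constants $\left(\frac{\alpha^4}{16}\right)^\alpha e^{-2\alpha} = \left(\frac{\alpha^2}{4}\right)^{2\alpha}e^{-2\alpha} = \left(\frac{\alpha^2}{4e}\right)^{2\alpha}$ and writing $c^{-\alpha}$ out front, reproduce exactly $\frac{2\sin(\alpha\pi)c^{-\alpha}}{h}\left(\frac{2ke^{1/2}}{\alpha}\right)^{-4\alpha}\left(\ln\frac{2k}{\phi_k\alpha}\right)^{2\alpha}$; and since $\frac{2k}{\phi_k\alpha} = \frac{4k^2 e}{\alpha^2}\left(\frac{h}{c^{-\alpha}+h}\right)^{1/\alpha}$ by~(\ref{phik}), the argument of the logarithm is precisely the bracketed expression in the statement.

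The delicate point, and the one I would be most careful about, is the justification that the logarithmic factor $\left(\ln\frac{2k}{\phi_k\alpha}\right)^{2\alpha}$ is captured correctly under the $\sim$ relation: because the correction in $W(y) = \ln y - \ln\ln y + o(1)$ is of relative size $\frac{\ln\ln y}{\ln y} \to 0$, it does get absorbed into the $(1+\varepsilon_k)$ factor built into the definition of $\sim$, so $\exp(W(y)) \sim y/\ln y$ is legitimate in this sense — but one must check that the various $\sim$-approximations compounded along the way (from Propositions~\ref{p2}, \ref{p4}, \ref{p5}, \ref{p6}, plus this $W$-expansion) all have relative errors tending to zero, so that their product still satisfies $a_k = b_k(1+\varepsilon_k)$ with $\varepsilon_k \to 0$. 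I would also double-check the harmless-looking replacement $g_k(\lambda_2) \sim \max\{g_k(c), g_k(\lambda_2)\}$: it relies on $\varphi_1(\tau_k) \sim \varphi_2(\tau_k)$, which is how $\tau_k$ was constructed in Proposition~\ref{p6}, but since $\tau_k$ only approximately solves $\varphi_1 = \varphi_2$ one should verify the two sides agree to relative error $o(1)$ — this follows by tracking the approximations~(\ref{apprexp}) and the small-argument expansion of $W$ used there. Apart from these bookkeeping checks on the error propagation, the proof is a direct substitution and simplification.
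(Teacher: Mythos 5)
Your proposal is correct and follows essentially the same route as the paper's proof: reduce to $2h\sin(\alpha\pi)\varphi_2(\tau_k)$ via Propositions~\ref{p2}, \ref{p5} and \ref{p6}, then use the large-argument expansion $W(z)=\ln z-\ln\ln z+\mathcal{O}(\ln\ln z/\ln z)$ to get $\exp(2W(z))\sim z^2/(\ln z)^2$ and substitute the explicit $\tau_k$. Your algebraic simplification of the constants and identification of $2k/(\phi_k\alpha)$ with the bracketed logarithm argument match the stated result, and your cautionary remarks on compounding the $\sim$-relations are consistent with (though more explicit than) what the paper does.
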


\begin{proof}
By Proposition \ref{p2} and (\ref{gk}) we have that%
\begin{equation*}
|r_{k}(\lambda )| \sim 2h\sin (\alpha \pi )g_{k}(\lambda ).
\end{equation*}%
Then using Proposition \ref{p6}, that is, taking $\tau =\tau _{k}$ as in (\ref{tauk}), we have%
\begin{equation}  \label{erc}
\max_{\lambda \in \lbrack c,+\infty )}|r_{k}(\lambda )| \sim 2h\sin (\alpha
\pi )\max_{\lambda \in \lbrack c,+\infty )}g_{k}(\lambda )   \sim 2h\sin (\alpha \pi )\varphi _{2}(\tau _{k}). 
\end{equation}
Since for large $z$%
\begin{equation*}
W(z)=\ln z-\ln (\ln z)+\mathcal{O}\left( \frac{\ln (\ln z)}{\ln z}\right) ,
\end{equation*}%
cf. \cite{HH}, we have that%
\begin{equation*}
\exp \left( 2W\left( z\right) \right) \sim \frac{z^{2}}{(\ln z)^{2}},
\end{equation*}%
and hence 
\begin{equation*}
\tau _{k}\sim c\,\phi _{k}^{2}\left( \frac{2k}{\phi _{k}\alpha }\right)
^{2}\left( \ln \frac{2k}{\phi _{k}\alpha }\right) ^{-2}.
\end{equation*}%
By inserting this approximation in (\ref{erc}) we obtain the result.
\end{proof}


\section{The case of bounded operators}

\label{sec4} 

Let $\mathcal{L}_{N}$ be an arbitrary sharp discretization of $%
\mathcal{L}$ with spectrum contained in $[c,\lambda _{N}],$ where $\lambda
_{N}$ denotes the largest eigenvalue of $\mathcal{L}_{N}.$ The theory just
developed can easily be adapted to the approximation of $\left( I+h\mathcal{L%
}_{N}^{\alpha }\right) ^{-1}.$ In this situation, in order to define a
nearly optimal value for $\tau $, similarly to (\ref{minimax}) we want to
solve%
\begin{equation}
\min_{\tau >0}\max_{c\leq \lambda \leq \lambda _{N}}g_{k}(\lambda ).
\label{minmax2}
\end{equation}

Looking at Proposition \ref{p4} we have $\lambda _{2}=\lambda
_{2}(k)\rightarrow +\infty $ as $k\rightarrow +\infty $. As a consequence,
for $\lambda _{2}\leq \lambda _{N}$ ($k$ small), the solution of (\ref%
{minmax2}) remains the one approximated by (\ref{tauk}) and the estimate
given in Theorem \ref{th1} is still valid. On the contrary, for $\lambda
_{2}>\lambda _{N}$ ($k$ large), the estimate can be improved as follows.
Remembering the features of the function $g_{k}(\lambda )$  given in Proposition \ref{p0}, 
we have that for $\lambda _{2}>\lambda _{N}$ the solution of (\ref%
{minmax2}) is obtained by solving%
\begin{equation}
\varphi _{1}\left( \tau \right) =\varphi _{3}\left( \tau \right) \mbox{ for }%
\tau >c,  \label{pbt2}
\end{equation}%
where $\varphi _{1}\left( \tau \right) $ is defined in (\ref{ph}) and 
\begin{equation*}
\varphi _{3}\left( \tau \right) :=\frac{\lambda _{N}^{-\alpha }\left[ \frac{%
\lambda _{N}^{1/2}-\tau ^{1/2}}{\lambda _{N}^{1/2}+\tau ^{1/2}}\right] ^{2k}%
}{\left( \lambda _{N}^{-\alpha }+h\right) ^{2}}=g_{k}(\lambda _{N}).
\end{equation*}%
It can be easily verified that the equation $\varphi _{1}\left( \tau \right)
=\varphi _{3}\left( \tau \right) $ has in fact two solutions, one in the
interval $(0,c)$ and the other in $(c,\lambda _{N})$. Anyway since $\varphi
_{3}\left( \tau \right) $ is monotone decreasing in $[0,\lambda _{N})$ we
have to look for the one in $(c,\lambda _{N})$ as stated in (\ref{pbt2}).

\begin{proposition}
For $k$ large enough, the solution of (\ref{pbt2}) is approximated by 
\begin{equation}
\tau _{k,N}:={\left( -\sigma _{k}+\sqrt{\sigma _{k}^{2}+\left( c\,\lambda
_{N}\right) ^{1/2}}\right) ^{2}},  \label{tk2}
\end{equation}%
where 
\begin{equation*}
\sigma _{k}:=\frac{\alpha \lambda _{N}^{1/2}}{8k}\ln \left( \frac{\lambda
_{N}}{c}\left( \frac{\lambda _{N}^{-\alpha }+h}{c^{-\alpha }+h}\right)
^{2/\alpha }\right) .
\end{equation*}
\end{proposition}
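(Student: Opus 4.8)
The plan is to mimic the proof of Proposition~\ref{p6}, since equation~(\ref{pbt2}) has exactly the same structure as $\varphi_1(\tau)=\varphi_2(\tau)$ except that the decreasing branch $\varphi_2$ (coming from the interior maximum $\lambda_2$) is replaced by $\varphi_3$, the value of $g_k$ at the \emph{fixed} right endpoint $\lambda_N$. First I would write out $\varphi_1(\tau)=\varphi_3(\tau)$ explicitly using (\ref{ph}) and the definition of $\varphi_3$; the prefactors $c^{-\alpha}/(c^{-\alpha}+h)^2$ and $\lambda_N^{-\alpha}/(\lambda_N^{-\alpha}+h)^2$ are both constants independent of $\tau$, so after dividing through one is left with
\begin{equation*}
\left[\frac{c^{1/2}-\tau^{1/2}}{c^{1/2}+\tau^{1/2}}\right]^{2k}
=\frac{c^{-\alpha}(\lambda_N^{-\alpha}+h)^2}{\lambda_N^{-\alpha}(c^{-\alpha}+h)^2}
\left[\frac{\lambda_N^{1/2}-\tau^{1/2}}{\lambda_N^{1/2}+\tau^{1/2}}\right]^{2k}.
\end{equation*}
Taking $2k$-th roots turns this into a single equation relating $(c^{1/2}-\tau^{1/2})/(c^{1/2}+\tau^{1/2})$ and $(\lambda_N^{1/2}-\tau^{1/2})/(\lambda_N^{1/2}+\tau^{1/2})$, with the constant appearing as its $1/(2k)$ power on the right.

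Second, as in Proposition~\ref{p6}, the key observation is that for $k$ large the solution $\tau_{k,N}$ sits strictly between $c$ and $\lambda_N$, so both $c^{1/2}/\tau^{1/2}$-type ratios are bounded away from the degenerate cases and, more importantly, $\tau^{1/2}$ stays $O(1)$ while the exponent $2k\to\infty$. This means I can linearize the logarithms of the two Blaschke-type factors: using $\ln\frac{1-u}{1+u}\sim -2u$ one gets, after taking logarithms of the rooted equation and multiplying by $k$,
\begin{equation*}
-2k\,\frac{\tau^{1/2}-c^{1/2}}{\tau^{1/2}+c^{1/2}}
\;\sim\;
\tfrac12\ln\!\left(\frac{c^{-\alpha}(\lambda_N^{-\alpha}+h)^2}{\lambda_N^{-\alpha}(c^{-\alpha}+h)^2}\right)
-2k\,\frac{\tau^{1/2}-\lambda_N^{1/2}}{\tau^{1/2}+\lambda_N^{1/2}}.
\end{equation*}
A cleaner route, and the one I would actually follow, is to approximate each factor by an exponential exactly as in (\ref{apprexp}): write $[(\sqrt{\lambda}-\sqrt\tau)/(\sqrt\lambda+\sqrt\tau)]^{2k}$ and expand around the geometric-mean point. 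The natural substitution that makes the algebra symmetric is $y:=\tau^{1/2}$, together with the endpoints $\sqrt c$ and $\sqrt{\lambda_N}$; then the linearized equation becomes linear in the quantity $\bigl(y^2-\sqrt{c\lambda_N}\bigr)$ after clearing the denominators $(\sqrt\tau+\sqrt c)(\sqrt\tau+\sqrt{\lambda_N})$, both of which are $\sim(\sqrt c+\sqrt{\lambda_N})\cdot(\text{something})$ — but to keep things tractable one replaces these $\tau$-dependent denominators by their values at the geometric mean $\tau=\sqrt{c\lambda_N}$, which is legitimate to leading order. This is precisely where the ansatz $\tau^{1/2}\approx (c\lambda_N)^{1/4}$ enters and produces the shift $\sigma_k$: one ends up with a quadratic $y^2 + 2\sigma_k y - \sqrt{c\lambda_N}=0$ whose positive root is exactly (\ref{tk2}).

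The step I expect to be the main obstacle — and the one deserving the most care — is justifying the replacement of the $\tau$-dependent denominators (and of the argument inside the exponential approximation) by their values at the geometric mean. Unlike Proposition~\ref{p6}, where $c$ was fixed and $\tau_k\to\infty$ so the ratio $\sqrt{c/\tau}\to0$ cleanly, here \emph{both} $\sqrt{\tau/c}$ and $\sqrt{\lambda_N/\tau}$ stay bounded (indeed $\tau_{k,N}$ is squeezed in a fixed interval), so the small parameter driving everything is only $1/k$, hidden in $\sigma_k=O((\ln k)/k)$. Thus I need: (i) the exponential approximation (\ref{apprexp}) is valid because the relevant argument is $2k$ times a quantity that is $O(\sigma_k)=o(1)$; (ii) $\sigma_k\to0$, which follows since $\lambda_N,c,h$ are fixed and $\sigma_k$ carries a factor $1/k$; and (iii) the self-consistency check that the $\tau_{k,N}$ produced by solving the quadratic indeed satisfies $\tau_{k,N}\to\sqrt{c\lambda_N}\in(c,\lambda_N)$, so a posteriori all the linearizations were performed about the correct point and the denominators were correctly frozen. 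Once (i)–(iii) are in place, the displayed quadratic and its root (\ref{tk2}) follow by elementary algebra, and the $\sim$ in the statement is the composition of the $o(1)$ errors from (\ref{apprexp}) and from freezing the denominators. I would close by remarking, as the paper does after Theorem~\ref{th1}, that this value can be fed back into $2h\sin(\alpha\pi)\max_\lambda g_k(\lambda)\sim 2h\sin(\alpha\pi)\varphi_1(\tau_{k,N})$ to obtain the corresponding (now geometrically decaying) error estimate.
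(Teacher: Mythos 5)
Your proposal is correct and follows essentially the same route as the paper: equate $\varphi_1(\tau)=\varphi_3(\tau)$, apply the exponential approximation (\ref{apprexp}) to both Blaschke-type factors, and reduce to the quadratic $\tau+2\sigma_k\tau^{1/2}-(c\,\lambda_N)^{1/2}=0$ whose positive root is (\ref{tk2}). The one step you flag as the main obstacle --- freezing the $\tau$-dependent denominators at the geometric mean --- is not needed at all: with the paper's substitution $x=(c/\tau)^{1/2}$, $y=(\tau/\lambda_N)^{1/2}$ the linearized relation reads exactly $x-y=\frac{\alpha}{4k}\ln\bigl(\frac{\lambda_N}{c}(\frac{\lambda_N^{-\alpha}+h}{c^{-\alpha}+h})^{2/\alpha}\bigr)$, and multiplying through by $(\tau\lambda_N)^{1/2}$ turns this identically into the stated quadratic, so the only approximation incurred is (\ref{apprexp}) itself (whose validity here rests on $x,y\to(c/\lambda_N)^{1/4}$ being small, i.e.\ $\lambda_N\gg c$, rather than on the argument being $o(1)$ in $k$ as your point (i) asserts).
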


\begin{proof}
From (\ref{pbt2}) we have 
\begin{equation}
\frac{c^{-\alpha }}{(c^{-\alpha }+h)^{2}}\left[ \frac{\tau ^{1/2}-c^{1/2}}{%
\tau ^{1/2}+c^{1/2}}\right] ^{2k}=\frac{\lambda _{N}^{-\alpha }}{(\lambda
_{N}^{-\alpha }+h)^{2}}\left[ \frac{\lambda _{N}^{1/2}-\tau ^{1/2}}{\lambda
_{N}^{1/2}+\tau ^{1/2}}\right] ^{2k}.  \label{pr1}
\end{equation}%
Setting $x=\left( {c}/{\tau }\right) ^{1/2}<1$ and $y=\left( \tau /\lambda
_{N}\right) ^{1/2}<1$ by (\ref{pr1}) we obtain 
\begin{equation*}
\left( \frac{1-x}{1+x}\right) =\left( \frac{\lambda _{N}}{c}\right) ^{-\frac{%
\alpha }{2k}}\left( \frac{c^{-\alpha }+h}{\lambda _{N}^{-\alpha }+h}\right)
^{\frac{1}{k}}\left( \frac{1-y}{1+y}\right) .
\end{equation*}%
Using (\ref{apprexp}) we solve%
\begin{equation*}
e^{-2x}=\left( \frac{\lambda _{N}}{c}\right) ^{-\frac{\alpha }{2k}}\left( 
\frac{c^{-\alpha }+h}{\lambda _{N}^{-\alpha }+h}\right) ^{\frac{1}{k}%
}e^{-2y}.
\end{equation*}%
Therefore 
\begin{equation*}
-2x=-\frac{\alpha }{2k}\ln \left( \frac{\lambda _{N}}{c}\left( \frac{\lambda
_{N}^{-\alpha }+h}{c^{-\alpha }+h}\right) ^{2/\alpha }\right) -2y,
\end{equation*}%
which implies 
\begin{equation*}
x-y=\frac{\alpha }{4k}\ln \left( \frac{\lambda _{N}}{c}\left( \frac{\lambda
_{N}^{-\alpha }+h}{c^{-\alpha }+h}\right) ^{2/\alpha }\right) .
\end{equation*}%
Substituting $x$ by $\left( {c}/{\tau }\right) ^{1/2}$ and $y$ by $\left(
\tau /\lambda _{N}\right) ^{1/2}$ after some algebra we obtain 
\begin{equation*}
\tau +\frac{\alpha }{4k}\lambda _{N}^{1/2}\ln \left( \frac{\lambda _{N}}{c}%
\left( \frac{\lambda _{N}^{-\alpha }+h}{c^{-\alpha }+h}\right) ^{2/\alpha
}\right) \tau ^{1/2}-(c\,\lambda _{N})^{1/2}=0.
\end{equation*}%
Then, solving this equation and taking the positive solution, we obtain the
expression of $\tau _{k,N}.$
\end{proof}

Observe that by (\ref{tk2}), for $k\rightarrow +\infty $ we have 
\begin{equation*}
\left( \frac{\tau _{k,N}}{\lambda _{N}}\right) ^{1/2}=-\frac{\alpha }{8k}\ln
\left( \frac{\lambda _{N}}{c}\left( \frac{\lambda _{N}^{-\alpha }+h}{%
c^{-\alpha }+h}\right) ^{2/\alpha }\right) +\left( \frac{c}{\lambda _{N}}%
\right) ^{1/4}+\mathcal{O}\left( \frac{1}{k^{2}}\right) .
\end{equation*}%
Moreover, using (\ref{apprexp}) and the above expression we obtain%
\begin{equation}
\begin{split}
\varphi _{3}\left( \tau _{k,N}\right)  &=\frac{\lambda _{N}^{-\alpha }}{%
(\lambda _{N}^{-\alpha }+h)^{2}}\left[ \frac{\lambda _{N}^{1/2}-\tau _{k,N}}{%
\lambda _{N}^{1/2}+\tau _{k,N}}\right] ^{2k}  \notag \\
&\sim \frac{\lambda _{N}^{-\alpha }}{(\lambda _{N}^{-\alpha }+h)^{2}}\exp
\left( -4k\left( \frac{\tau _{k,N}}{\lambda _{N}}\right) ^{1/2}\right)  
\notag \\
&\sim \frac{\left( c\,\lambda _{N}\right) ^{-\alpha /2}}{(c^{-\alpha
}+h)(\lambda _{N}^{-\alpha }+h)}\exp \left( -4k\left( \frac{c}{\lambda _{N}}%
\right) ^{1/4}\right) ,  \label{res3}
\end{split}
\end{equation}
that proves the following result.

\begin{theorem}
\label{MT2}Let $\overline{k}$ be such that for each $k\geq \overline{k}$ we
have $\lambda _{2}=\lambda _{2}(k)>\lambda _{N}.$ Then for each $k\geq 
\overline{k}$, taking in (\ref{jss}) $\tau =\tau _{k,N},$ where $\tau _{k,N}$
is given in (\ref{tk2}), the following estimate holds%
\begin{equation}
\begin{split}
\left\Vert \left( I+h\mathcal{L}_{N}^{\alpha }\right) ^{-1}-%
\mathcal{S}_{k-1,k}(\mathcal{L}_{N})\right\Vert _{2} &\sim
2h\sin (\alpha \pi )\frac{\left( c\,\lambda _{N}\right) ^{-\alpha /2}}{%
(c^{-\alpha }+h)(\lambda _{N}^{-\alpha }+h)}  \notag \\
&\times \exp \left( -4k\left( \frac{c}{\lambda _{N}}\right) ^{1/4}\right),  \label{th2}
\end{split}
\end{equation}
with $\Vert \cdot \Vert _{2}$ denoting the induced Euclidean norm.
\end{theorem}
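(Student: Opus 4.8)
The plan is to assemble Theorem~\ref{MT2} from the pieces already in place, exactly parallel to the proof of Theorem~\ref{th1}. First I would invoke Proposition~\ref{p2} together with the definition~(\ref{gk}) to write, for $k$ large, the scalar error as $|r_k(\lambda)| \sim 2h\sin(\alpha\pi)\,g_k(\lambda)$, and then pass to the operator norm via the spectral theorem: because $\mathcal{L}_N$ is self-adjoint with spectrum in $[c,\lambda_N]$, one has
\begin{equation*}
\left\Vert \left( I+h\mathcal{L}_N^{\alpha}\right)^{-1}-\mathcal{S}_{k-1,k}(\mathcal{L}_N)\right\Vert_2 = \max_{c\le\lambda\le\lambda_N}|r_k(\lambda)| \sim 2h\sin(\alpha\pi)\max_{c\le\lambda\le\lambda_N}g_k(\lambda).
\end{equation*}

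Next I would identify the location of the maximum on $[c,\lambda_N]$. By Proposition~\ref{p0}, $g_k$ on $(0,+\infty)$ has exactly two interior maxima $\lambda_1,\lambda_2$ with $\lambda_1\to0$ and $\lambda_2\to+\infty$; for $k\ge\overline k$ we are in the regime $\lambda_2>\lambda_N$ (and $\lambda_1<c$ for $k$ large), so $g_k$ restricted to $[c,\lambda_N]$ is increasing on a left piece and decreasing on a right piece with no interior critical point, hence its maximum is attained at one of the endpoints: $\max_{[c,\lambda_N]}g_k=\max\{g_k(c),g_k(\lambda_N)\}=\max\{\varphi_1(\tau),\varphi_3(\tau)\}$. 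Since $\varphi_1$ is increasing and $\varphi_3$ decreasing in $\tau$ on the relevant range, the minimax value over $\tau>c$ is the common value at the crossing point, and by the preceding Proposition the choice $\tau=\tau_{k,N}$ from~(\ref{tk2}) realizes (asymptotically) this balance, so $\max_{[c,\lambda_N]}g_k(\lambda)\sim\varphi_3(\tau_{k,N})$.

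Finally I would substitute the asymptotic evaluation of $\varphi_3(\tau_{k,N})$ already carried out in~(\ref{res3}), namely
\begin{equation*}
\varphi_3(\tau_{k,N})\sim \frac{(c\,\lambda_N)^{-\alpha/2}}{(c^{-\alpha}+h)(\lambda_N^{-\alpha}+h)}\exp\!\left(-4k\left(\frac{c}{\lambda_N}\right)^{1/4}\right),
\end{equation*}
which rests on the expansion of $(\tau_{k,N}/\lambda_N)^{1/2}$ in powers of $1/k$ and on the approximation~(\ref{apprexp}) for $[(1-y)/(1+y)]^{2k}$. Multiplying by $2h\sin(\alpha\pi)$ yields precisely~(\ref{th2}).

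The main obstacle, as in Theorem~\ref{th1}, is not any single estimate but keeping the chain of $\sim$-relations legitimate: one must check that the error terms $\mathcal{O}(1/k)$ in Proposition~\ref{p2}, the $\mathcal{O}((e_k(\lambda))^2)$ remainder, the $\mathcal{O}(1/k^2)$ correction in $(\tau_{k,N}/\lambda_N)^{1/2}$, and the error in replacing $[(1-y)/(1+y)]^{2k}$ by $e^{-4ky}$ are all genuinely negligible relative to the leading term \emph{uniformly} in $\lambda\in[c,\lambda_N]$ — in particular that the $\mathcal{O}((e_k)^2)$ term does not dominate near the endpoints and that the passage from the approximate balance point to the exact minimax introduces only a multiplicative $1+o(1)$. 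Given that these are precisely the approximations licensed by Propositions~\ref{p1}--\ref{p6} and the $\sim$-conventions fixed at the start of Section~\ref{sec3}, the argument is essentially bookkeeping once the endpoint-maximum structure from Proposition~\ref{p0} is invoked.
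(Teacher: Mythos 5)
Your proposal is correct and follows essentially the same route as the paper: the authors likewise reduce the operator norm to $\max\{g_k(c),g_k(\lambda_N)\}$ via Proposition~\ref{p2}, use the balance $\varphi_1(\tau_{k,N})=\varphi_3(\tau_{k,N})$ from the preceding proposition, and read off the result from the asymptotic evaluation~(\ref{res3}), stating that this computation ``proves the following result.'' Your closing remarks on the uniformity of the $\sim$-relations are a fair account of what the paper leaves implicit rather than a deviation from its argument.
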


In order to compute a fairly accurate estimate of $\overline{k}$ we need to
solve the equation $\lambda _{2}=\lambda _{N},$ where $\lambda _{2}$ is
defined in Proposition \ref{p4}. Neglecting the factor $s_{k}$ in (\ref{ad})
and taking $\tau =\tau _{k}$ as in (\ref{tauk}), we obtain the equation 
\begin{equation*}
W\left( \frac{2k}{\phi _{k}\alpha }\right) =\frac{1}{2}\ln \left( \frac{%
\lambda _{N}}{c}e^{2}\left( \frac{h}{c^{-\alpha }+h}\right) ^{2/\alpha
}\right) .
\end{equation*}%
Since $W(z_{1})=z_{2}$ if and only if $z_{1}=z_{2}e^{z_{2}},$ we have 
\begin{equation*}
\frac{4k^{2}}{\alpha ^{2}}=\frac{1}{2}\ln \left( \frac{\lambda _{N}}{c}%
e^{2}\left( \frac{h}{c^{-\alpha }+h}\right) ^{2/\alpha }\right) \left( \frac{%
\lambda _{N}}{c}\right) ^{1/2}
\end{equation*}%
from which it easily follows that 
\begin{equation*}
{\bar{k}}=\frac{\alpha }{2\sqrt{2}}\left( \ln \left( \frac{\lambda _{N}}{c}%
e^{2}\left( \frac{h}{c^{-\alpha }+h}\right) ^{2/\alpha }\right) \right)
^{1/2}\left( \frac{\lambda _{N}}{c}\right) ^{1/4}.
\end{equation*}

In practice, assuming to have a good estimate of the interval containing the
spectrum of $\mathcal{L}_{N},$ one should use $\tau _{k}$ as in (\ref{tauk})
whenever $k<\overline{k}$ and then switch to $\tau _{k,N}$ as in (\ref{tk2})
for $k\geq \overline{k}.$ In other words, for bounded operators we consider
the sequence 
\begin{equation}
\tau _{k,N}=\left\{ 
\begin{array}{lll}
\tau _{k} &  & \quad \mbox{if }k<\overline{k}, \\ 
\tau _{k,N} &  & \quad \mbox{if }k\geq \overline{k}.%
\end{array}%
\right.  \label{tauk2}
\end{equation}


\section{Numerical experiments}

\label{sec5}

In this section we present the numerical results obtained by considering two
simple cases of self-adjoint positive operators. The first one is totally
artificial since we just consider a diagonal matrix with a large spectrum.
In the second one we consider the standard central difference discretization of
the one dimensional Laplace operator with Dirichlet boundary conditions.

We remark that in all the experiments the weights and nodes of the
Gauss-Jacobi quadrature rule are computed by using the Matlab function 
\texttt{jacpts} implemented in Chebfun by Hale and Townsend \cite{HT}. In
addition, the errors are always plotted with respect to the Euclidean norm.

\begin{example}
\label{e1} We define $A=\func{diag}(1,2,\dots ,N)$ and $\mathcal{L}_{N}=A^{p}
$ so that $\sigma (\mathcal{L}_{N})\subseteq \lbrack 1,N^{p}]$. Taking $%
N=100,$ $p=7,$ and $h=10^{-2},$ in Figure \ref{fig2}, for $\alpha
=0.2,0.4,0.6,0.8$ we plot the error obtained using $\tau _{k}$ taken as in (%
\ref{tauk}) and $\tilde{\tau}_{k}$ as defined in \cite[Eq. (24)]{AN0}, that
is,%
\begin{equation}
\tilde{\tau}_{k}:=c\left( \frac{\alpha }{2ke}\right) ^{2}\exp \left(
2W\left( \frac{4k^{2}e}{\alpha ^{2}}\right) \right) .  \label{an0}
\end{equation}%
In addition, we draw the values of the estimate given in Theorem \ref{th1}.

\begin{figure}
\includegraphics[width=1.00\textwidth]{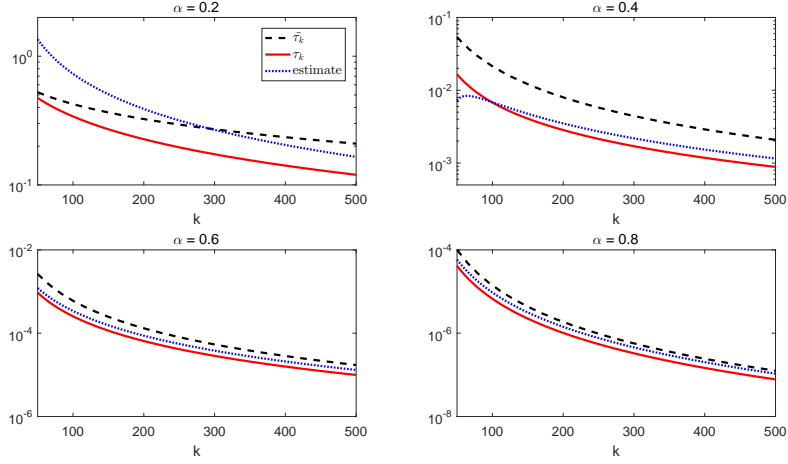}
\caption{Error comparison using $\protect\tau _{k}$ taken as in (\protect\ref%
{tauk}) (solide line) and $\tilde{\protect\tau _{k}}$ as defined in 
(\ref{an0}) (dashed line) for Example 1 with $N=100,$ $p=7,$
and $h=10^{-2}$. The dotted line represents the values of the estimate
given in Theorem \protect\ref{th1}. }
\label{fig2}
\end{figure}

In Figure \ref{fig3} we consider the choice of $\tau =\tau _{k,N}$ as in (%
\ref{tauk2}) since we take $p=3$, that is, an operator with a moderately
large spectrum. We compare this choice with the analogous one proposed in  
\cite[Eq. (37)]{AN0} and given by%
\begin{equation}
\widetilde{\tau }_{k,N}:={\left( -\frac{\alpha \lambda _{N}^{1/2}}{8k}\ln
\left( \frac{\lambda _{N}}{c}\right) +\sqrt{\left( \frac{\alpha \lambda
_{N}^{1/2}}{8k}\ln \left( \frac{\lambda _{N}}{c}\right) \right) ^{2}+\left(
c\,\lambda _{N}\right) ^{1/2}}\right) ^{2}.}  \label{anN}
\end{equation}%
In the pictures we also plot the error estimate (\ref{th2}).

\begin{figure}
\includegraphics[width=1.00\textwidth]{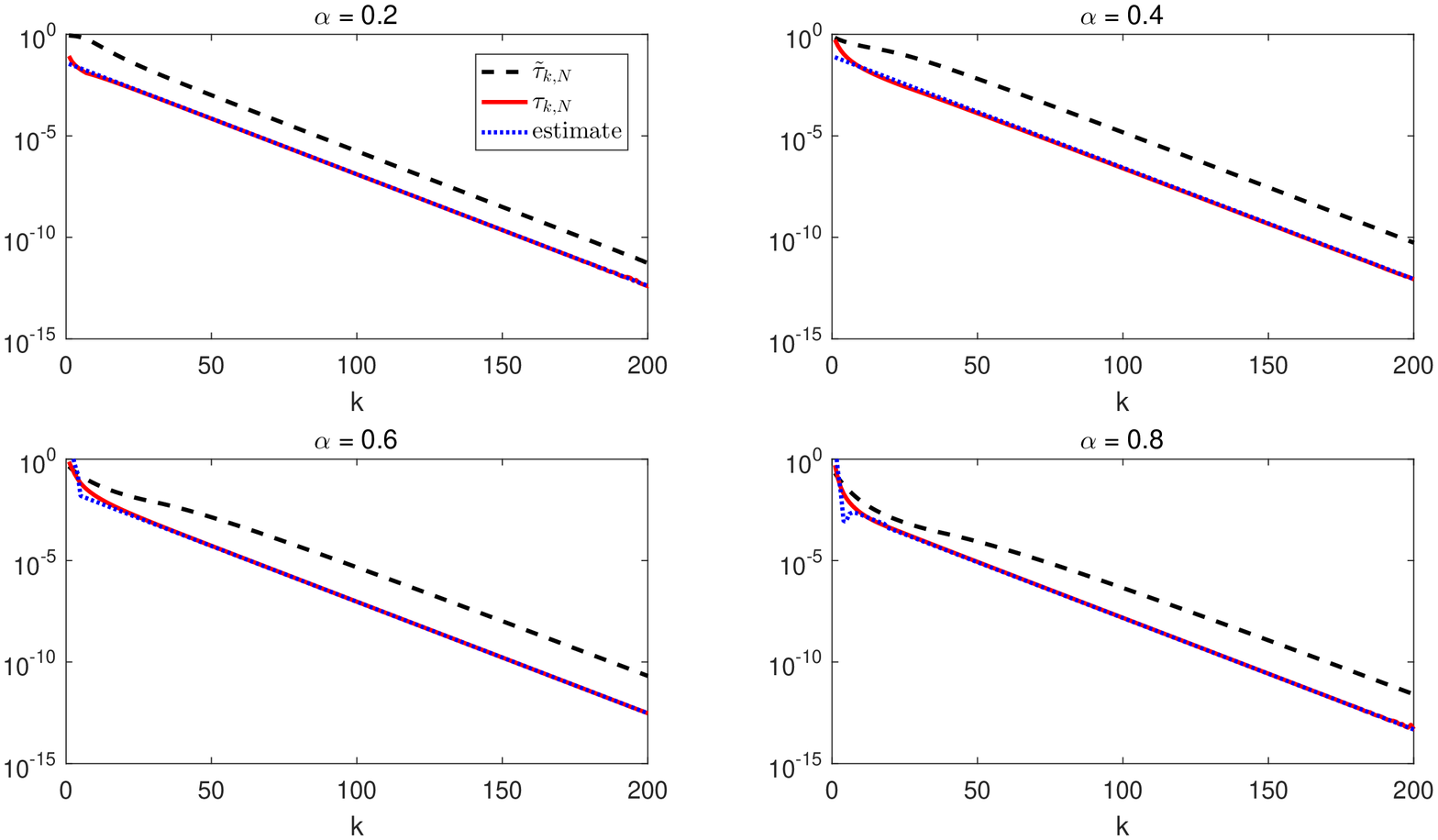}
\caption{Error comparison using $\protect\tau =\protect\tau _{k,N}$ as in (%
\protect\ref{tauk2}) (solide line) and $\tilde{\protect\tau}_{k,N}$ as
defined in (\ref{anN}) (dashed line) for Example \protect
\ref{e1} with $N=100,$ $p=3,$ and $h=10^{-2}.$ In the pictures we also plot
the error estimate (\protect\ref{th2}) (dotted line).}
\label{fig3}
\end{figure}
\end{example}

\begin{example}
\label{e2} We consider the linear operator $\mathcal{L}u=-u^{\prime \prime}, 
$ $u:[0, b]\rightarrow \mathbb{R},$ with Dirichlet boundary conditions $%
u(0)=u(b)=0$. It is known that $\mathcal{L}$ has a point spectrum consisting
entirely of eigenvalues 
\begin{equation*}
\mu_{s}=\frac{\pi ^{2}s^{2}}{b^{2}},\qquad \mbox{for }s=1,2,3,\dots.
\end{equation*}

Using the standard central difference scheme on a uniform grid and setting $%
b=1$, in this example we work with the operator 
\begin{equation}
\mathcal{L}_{N}:=(N+1)^{2}\func{tridiag}(-1,2,-1)\in \mathbb{R}^{N\times N}.
\label{LN}
\end{equation}%
The eigenvalues are 
\begin{equation*}
\lambda _{j}=4(N+1)^{2}\sin ^{2}\left( \frac{j\pi }{2(N+1)}\right) ,\qquad
j=1,2,\dots ,N,
\end{equation*}%
so that $\sigma (\mathcal{L}_{N})\subseteq \lbrack \pi ^{2},4(N+1)^{2}].$

Taking $N=1000$ and $h=10^{-2}$, in Figure \ref{fig4}, for $\alpha =0.6$ we
plot the error obtained using $\tau _{k,N}$ taken as in (\ref{tauk2}) and $%
\tilde{\tau}_{k,N}$ as in (\ref{anN}).

\begin{figure}
\includegraphics[width=1.00\textwidth]{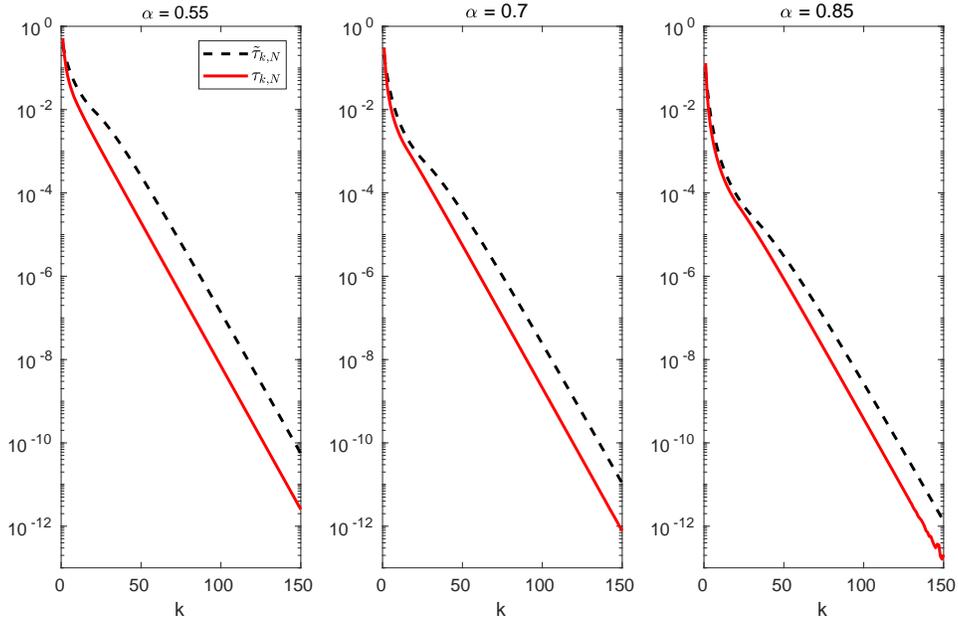}
\caption{Error comparison using $\protect\tau _{k,N}$ taken as in (\protect
\ref{tauk2}) (solide line) and $\tilde{\protect\tau}_{k,N}$ as defined in (%
\protect\ref{anN}) (dashed line) for Example \protect\ref{e2} with $N=1000,$
and $h=10^{-2}.$}
\label{fig4}
\end{figure}
\end{example}

\begin{example}
\label{e3} In this final example we want to consider the use of the poles
arising from the rational approximation introduced in Section \ref{sec2} for
the construction of rational Krylov methods (RKM), see e.g. \cite{BR, DKZ, Gu}. In this view, let%
\begin{equation*}
\mathcal{W}_{k}(\mathcal{L}_{N},v)=\func{Span}\{v,(\overline{\eta }_{1}I+%
\mathcal{L}_{N})^{-1}v,\ldots ,(\overline{\eta }_{1}I+\mathcal{L}_{N})^{-1} \cdots(\overline{\eta }_{k-1}I+\mathcal{L}_{N})^{-1}v\},
\end{equation*}%
be the $k$-dimensional rational Krylov subspace in which $\{\overline{\eta }%
_{1},\ldots ,\overline{\eta }_{k-1}\}$ is the set of abscissas as in (\ref%
{npad}), $\mathcal{L}_{N}$ defined by (\ref{LN}), and $v$ is a given vector.
Denoting by $V_{k}$ the orthogonal matrix whose columns span $\mathcal{W}%
_{k}(\mathcal{L}_{N},v)$ we consider the rational Krylov approximation%
\begin{equation}
\omega_{k}:=V_{k}\left( I+hH_{k}^{\alpha }\right) ^{-1}V_{k}^{T}v\approx \left(
I+h\mathcal{L}_{N}^{\alpha }\right) ^{-1}v,  \label{kapp}
\end{equation}%
in which $H_{k}=V_{k}^{T}\mathcal{L}_{N}V_{k}$. We remark that $\left( I+h%
\mathcal{L}_{N}^{\alpha }\right) ^{-1}v$ is just the result of one step of
length $h$ of the implicit Euler method applied to the discrete fractional
diffusion problem%
\begin{equation*}
y^{\prime }=\mathcal{L}_{N}^{\alpha }y,\quad y(0)=v.
\end{equation*}%
By taking $\tau _{k}$ as in (\ref{tauk}) to define the set $\{\overline{\eta 
}_{1},\ldots ,\overline{\eta }_{k-1}\}$, in Figure \ref{fig5} we consider
the error 
of the approxomation (\ref{kapp}), for $h=10^{-2}$ and $v$ corresponding to the
discretization of the scalar function $v(x)=x(1-x)$, for $x\in \lbrack 0,1].$ Since the 
construction of the Krylov subspace of dimension $k$ requires the
knowledge of the whole set $\{\overline{\eta }_{1},\ldots ,\overline{\eta }%
_{k-1}\}$, for $k=10,15,\dots ,30$ we compute the corresponding set and
consider the final Krylov approximation. In order to appreciate the quality
of the approximation we compare this approach with the analogous one in
which the set of shifts arises from $\widetilde{\tau }_{k}$ as in (\ref{an0}%
), and also with respect to the shift-and-invert Krylov method (SIKM), in
which we take $\overline{\eta }_{1}=\ldots =\overline{\eta }%
_{k-1}=h^{-1/\alpha }$, following the analysis given in \cite{Mo}.

We remark that for practical purposes one should be able to a-priori set the
dimension of the Krylov subspace and this of course requires an accurate
error estimate. In this view, the estimate given in Theorem \ref{th1} can be
used to this purpose, since (cf. \cite[Corollary 3.4]{Gu})%
\begin{equation}
\left\Vert \left( I+h\mathcal{L}_{N}^{\alpha }\right)
^{-1}v-\omega_{k}\right\Vert _{2}\leq 2\min_{p_{k}\in \Pi _{k}}\max_{c\leq
\lambda \leq \infty }\left\vert \left( 1+h\lambda ^{\alpha }\right) ^{-1}-%
\frac{p_{k}(\lambda )}{q_{k}(\lambda )}\right\vert \left\Vert v\right\Vert_2.
\label{kest}
\end{equation}%
Anyway we have to point out that using Theorem \ref{th1} in (\ref{kest}) the
resulting bound may be much conservative for two main reasons. The first one
is that we are in fact considering a $(k-1,k)$ approximation. The second one
is that Theorem \ref{th1} provides an estimate for general unbounded
operator whereas the Krylov method is tailored on the initial vector $v$ and
also depends on the eigenvalue distribution. For these reasons a practical
hint can be to define $k$ at the beginning using Theorem \ref{th1} and then
monitor the quality of the approximation at each Krylov iteration $j\leq k$
by means of the generalized residual given by%
\begin{equation*}
v_{j+1}^{T}\mathcal{L}_{N}v_{j}\left\vert e_{j}^{T}\left( I+hH_{j}^{\alpha
}\right) ^{-1}V_{j}^{T}v\right\vert,
\end{equation*}%
where $v_{j}$, $j=1,\dots ,k$, are the columns of $V_{k}$ and $%
e_{j}=(0,\dots,0,1)^{T}\in \mathbb{R}^{j}$.

\begin{figure}
\includegraphics[width=1.00\textwidth]{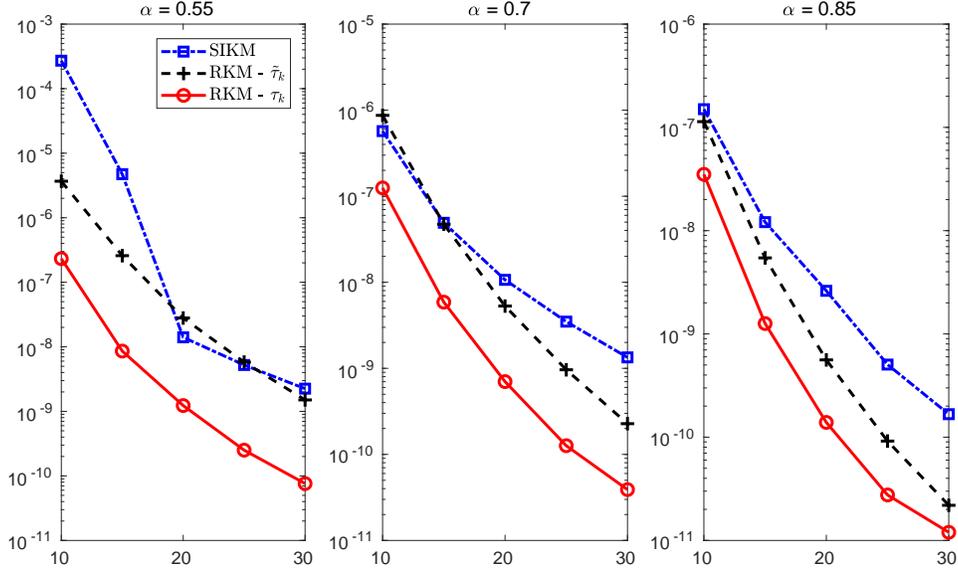}
\caption{Error comparison between SIKM, RKM with $\tilde{\protect\tau}_{k}$
as in (\protect\ref{an0}), RKM with $\protect\tau _{k}$ as in (\protect\ref%
{tauk}) for $\mathcal{L}_{N}$ defined in (\protect\ref{LN}), with $N=3000,$
and $h=10^{-2}$.}
\label{fig5}
\end{figure}
\end{example}

\section{Concluding remarks}

In this paper we have presented a reliable $\left( k-1,k\right) $ rational
approximation for the function $\left( 1+h\lambda ^{\alpha }\right) ^{-1}$
on a positive unbounded interval, that can be fuitfully used to compute
the resolvent of the fractional power in both the infinite and finite dimensional
setting. Moreover the theory can also be employed for the construction of
rational Krylov methods, with very good results. With respect to the simple
use of rational approximations to $\lambda ^{-\alpha }$, extended to compute 
$\left( 1+h\lambda ^{\alpha }\right) ^{-1}$ by means of (\ref{eqa}), in this
work we have shown that allowing a dependence on $h$ it is possible to
improve the quality of the approximation. We have provided sharp error
estimates that can be used for the a-priori choice of the number of poles,
that is, the number of inversions.

Remaining in the framework of Pad\'{e}-type approximations, we
want to point out that many other strategies are possible. Among the others
we present here two of them already tested experimentally.

\begin{enumerate}
\item Writing%
\begin{equation*}
\frac{1}{1+h\lambda ^{\alpha }}=\frac{1}{1+h\lambda \lambda ^{\alpha -1}},
\end{equation*}%
we can consider the Pad\'{e}-type approximation (\ref{pad}), with $-\alpha $
replaced by $\alpha -1$. In this way we obtain the approximation%
\begin{equation*}
\begin{split}
\frac{1}{1+h\lambda ^{\alpha }} &\approx \frac{1}{1+h\lambda \mathcal{R}%
_{k-1,k}(\lambda )} \\
&=\frac{q_{k}\left( \lambda \right) }{q_{k}\left( \lambda \right) +h\lambda
p_{k-1}\left( \lambda \right) },
\end{split}
\end{equation*}
that in fact represents a $(k,k)$ form. Unfortunately this approach is
observed to be in general less effective than the one defined in (\ref{sk})-(%
\ref{npad}).

\item Let $R_{k,k}(\lambda /\tau )$ be the $(k,k)$-Pad\'{e} approximant of $%
(\lambda /\tau )^{-\alpha }$ centered at $1$, whose error representation
with respect to the variable $z=1-\lambda /\tau $ has been derived in \cite[%
Section 3]{E}. As in (\ref{pad}) we can consider the formula 
\begin{equation*}
\lambda ^{-\alpha }\approx \mathcal{R}_{k,k}(\lambda ),\quad \mathcal{R}%
_{k,k}(\lambda ):=\tau ^{-\alpha }R_{k,k}(\lambda /\tau ),
\end{equation*}%
that yields the $(k,k)$ rational approximation%
\begin{equation*}
\frac{1}{1+h\lambda ^{\alpha }}\approx \frac{p_{k}(\lambda )}{q_{k}(\lambda
)+hp_{k}(\lambda )}=:\mathcal{S}_{k,k}(\lambda ),
\end{equation*}%
in which $p_{k},q_{k}\in \Pi _{k}$ are such that $\mathcal{R}_{k,k}(\lambda
)=p_{k}(\lambda )/q_{k}(\lambda )$. Using this approach, the relationship
with the Gauss-Jacobi rule explained in Section \ref{sec2} is lost.
Nevertheless, the error analysis is identical to the one given in Section %
\ref{sec3}, since the representation (\ref{ek}) is still valid with $k$
replaced by $k+1$. Also experimentally, this approach is almost identical to
the one presented in the paper.
\end{enumerate}


\bibliographystyle{amsplain}

\end{document}